\newtheorem{theo}{Theorem}[section]
\newtheorem{lemma}[theo]{Lemma}
\newtheorem{prop}[theo]{Proposition}
\newtheorem{cor}[theo]{Corollary}
\newtheorem{example}[theo]{Example}
\newtheorem{remark}[theo]{Remark}
\def\bC{{\mathbb C}}
\def\bR{{\mathbb R}}
\def\bZ{{\mathbb Z}}
\def\bT{{\mathbb T}}
\def\e{{\epsilon}}
\def\a{{\alpha}}
\def\b{{\beta}}
\newcommand{\fgbar}{f\bar{g}}
\newcommand{\B}{\mathbb{B}}
\newcommand{\BS}{\mathbb{S}}
\newcommand{\BD}{\mathbb{D}}
\newcommand{\s}{\vspace{0,3cm}}
\begin{document}
\title[Boundaries of Milnor Fibres]{The boundary of the Milnor fibre of complex and real analytic non-isolated singularities}
\author{Javier Fern\'andez de Bobadilla}
\author{Aur\'elio Menegon Neto}
\address{ICMAT. CSIC-Complutense-Aut\'onoma-Carlos III}
\email{javier@mat.csic.es}
\address{Departamento de Matem\'atica - Universidade Federal da Para\'iba}
\email{aurelio@mat.ufpb.br}
\thanks{Research partially supported by the ERC Starting Grant project TGASS and by Spanish Contract MTM2007-67908-C02-02. The authors thank the Facultad de Ciencias Matem\'aticas of the Universidad Complutense de Madrid for the excellent working conditions. The second named author received financial support from CONACYT, Mexico; CNRS, France through the Laboratorio Internacional Solomon Lefschetz and an ECOS-ANUIES agreement and FAPESP, Brasil, as well as through grants from CONACYT and from the Posgrado en Matem\'aticas of Universidad Nacional Aut\'onoma de M\'exico}
\date{24-3-2013}
\subjclass[2000]{Primary: 14B05, 14J17, 14E15, 32S05, 32S25, 32S45}
\begin{abstract}
Let $f$ and $g$ be holomorphic function-germs vanishing at the origin of a complex analytic germ of dimension three. Suppose that they have no common irreducible component and that the real analytic map-germ $\fgbar$
has an isolated critical value at $0$. We give necessary and sufficient conditions for the real analytic map-germ $\fgbar$ to have a Milnor fibration and we prove that in this case the boundary of its Milnor fibre is a 
Waldhausen manifold. As an intermediate milestone we describe geometrically the Milnor fibre of map-germs of the form $\fgbar$ defined in a complex surface germ, and we prove an A'Campo-type formula for the zeta function of its monodromy. 
\end{abstract}

\maketitle

\section{Introduction}

It is classically known that there is a rich interplay between $3$-manifold
theory and the topology of isolated singularities in complex surfaces. This
goes back to the work of F. Klein by the end of the  19th century, and then
made clearer by in the  early 1960s, by the work of Grauert \cite{Gr} and
Mumford \cite{Mu}. A closed oriented $3$-manifold $M$  is the link of some isolated complex surface singularity $(V,p)$ if and only if $M$ is a Waldhausen manifold (a $3$-manifold which is obtained by gluing some Seifert manifolds along tori) with negative definite intersection form.

This important theorem has played, on one hand, a key role for understanding the topology of surface singularities through the work of W. Neumann and many others. On the other hand, the links of 
isolated surface singularities provide a very interesting class of $3$-manifolds which, thanks to their rich algebraic nature, have proved to be rather useful for $3$-manifold theory, as for instance for 
the understanding of the Casson invariant, Seiberg-Witten invariants, Floer homology and other important
invariants of $3$-manifolds that  have been discovered in the last decades.

In this sense, it is interesting to find new classes of $3$-manifolds, besides
the links of isolated complex surface singularities, that have a rich (possibly algebraic) geometric structure. One has the
theorem of A. Nemethi, A. Szilard, F. Michel and A. Pichon, stating that if $f$ is a holomorphic function-germ $(\bC^3,0) \to (\bC,0)$ with a $1$-dimensional critical set, 
then the boundary of the Milnor fiber is a Waldhausen manifold. The theorem was announced first by F. Michel and A. Pichon \cite{MP}, but its proof contained a gap. In~\cite{MPW} F. Michel,
A. Pichon and C. Weber provided a proof valid for some classes of non-isolated singularities. The first complete proof of the general (reduced holomorphic) case of the theorem was provided by A. Nemethi and 
A. Szilard
in \cite{NS}, where even an algorithm to compute the graph describing the Waldhausen manifold is given.
Shortly afterwards F. Michel and A. Pichon have provided in \cite{MP3} another complete proof which is more in the spirit of the original method they proposed. In this paper we envisage a similar problem, although in a more general approach. 

Let $(X,0)$ be a $3$-dimensional isolated singularity complex analytic germ at the origin. Let $f$ and $g$ be holomorphic function-germs defined in $(X,0)$. We say that the real analytic map-germ $\fgbar: (X,0) \to (\bC,0)$ has a Milnor fibration if there exist small positive real numbers $\e_j$ and $\eta$ with $0<\eta<<\e_j <<1$ such that the restriction
$$(\fgbar)|_{(\fgbar)^{-1}(\BD_\eta^*) \cap X \cap \Delta_{\underline\e}}: (\fgbar)^{-1}(\BD_\eta^*) \cap X \cap \Delta_{\underline\e} \to \BD_\eta^*$$
is the projection of a locally trivial fibration, where $\Delta_{\underline\e}$ is the polydisk in the ambient space $\bC^N \supset X$ defined by
$$\Delta_{\underline\e}:= \{ (z_1, \dots, z_N) \in \bC^N: |z_1|\leq\e_1, \dots, |z_N|\leq \e_N \},$$
$\BD_\eta$ is the closed ball around $0$ in $\bC$ with radius $\eta$ and $\BD_\eta^*:= \BD_\eta \backslash \{0\}$.

A. Pichon and J. Seade proved in \cite{PS} that if $f$ and $g$ are holomorphic function-germs defined on a holomorphic germ of surface $X$ with no common irreducible components and if the real analytic map-germ $\fgbar: (X,0) \to (\bC)$ has an isolated critical value, then $\fgbar$ has the Thom $a_f$ condition and therefore it has a Milnor fibration in the tube. They also conjectured that this was true in higher dimensions and latter they gave a proof to this conjecture in \cite{PS2}, which turned out to be false, as pointed by A. Parusinski with a counter-example (see \cite{PS3}).

Our main theorem is:

\begin{theo} \label{theo_main}
Let $(X,0)$ be a $3$-dimensional isolated singularity complex analytic germ at the origin. Let $f$ and $g$ be holomorphic function-germs (not necessarily reduced) defined in $(X,0)$ such that 
$V(f):=f^{-1}(0)$ and $V(g):=g^{-1}(0)$ have no common irreducible components and such that the real analytic map-germ $\fgbar: (X,0) \to (\bC,0)$ has an isolated critical value. Let $\Sigma$ denote the singular set
of the surface $V(fg)$, considered with reduced structure. Then:

\begin{itemize}
\item[$(i)$] For each irreducible component $\Sigma_i$ of the intersection $V(f) \cap V(g)$, let $H_i \subset X$ be a generic transversal section, with $i= 1, \dots, k$. 
Then $\fgbar$ has a Milnor fibration in the tube if and only if 
the restriction of $\fgbar$ to each $H_i$ has isolated critical values (which is equivalent to an easy to check numerical criterion at the level of the 
embedded resolution of the restriction of $fg$ to $H_i$, as showed in \cite{PS}). 

\item[$(ii)$] In this case, the boundary of the Milnor fibre $L_t := (\fgbar)^{-1}(t) \cap X \cap \BS_\e$, for $t \in \BD_\eta$, is a Waldhausen manifold.
\end{itemize}
\end{theo}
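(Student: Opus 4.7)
The plan is to reduce the three-dimensional statement to the two-dimensional case of $\fgbar$ on a surface germ, which has been treated in the preceding sections of the paper. The reduction is performed fibrewise along each curve $\Sigma_i\subset V(f)\cap V(g)$, and everything is assembled on an embedded resolution of the hypersurface $V(fg)$ inside $X$.

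For part $(i)$, necessity is almost immediate. If $\fgbar$ has a Milnor fibration in the tube and $p\in\Sigma_i\setminus\{0\}$, then trivializing the fibration along $\Sigma_i$ by an integrable vector field tangent to $\Sigma_i$ identifies the local model at $p$ with the product of $\fgbar|_{H_i}$ by a small disk along $\Sigma_i$; the two-dimensional Pichon--Seade criterion then forces $\fgbar|_{H_i}$ to have an isolated critical value at $0$. For sufficiency, take an embedded resolution $\pi\colon\widetilde X\to X$ of $V(fg)$. For generic $\underline{\e}$ one shows that $\fgbar\circ\pi$ has no critical points inside $\pi^{-1}(\Delta_{\underline{\e}})$ with value in a sufficiently small $\BD_\eta^*$. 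On the open strata of the exceptional divisor this is a direct computation: both $f\circ\pi$ and $g\circ\pi$ are monomials times units in the normal coordinate, so the partial derivative of $\fgbar\circ\pi$ transverse to the divisor does not vanish, and any accumulation of critical values at $0$ must come from points above the strict transforms of the $\Sigma_i$, where the numerical hypothesis combined with the surface-case criterion rules it out. A Curve Selection Lemma together with Ehresmann's theorem then produces the Milnor fibration in the tube.

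For part $(ii)$ we analyze $\partial L_t$ by plumbing, using the same resolution $\pi$. Decompose $\widetilde X$ into closed regular neighborhoods $N_\alpha$ of the irreducible components $E_\alpha$ of the exceptional divisor, truncated at the pairwise intersections $E_\alpha\cap E_\beta$ and at the strict transforms of $V(f)\cup V(g)$. Over the open part of each $E_\alpha$ the map $\fgbar\circ\pi$ has the local form $u^{N_\alpha^f}\,\overline{u}^{\,N_\alpha^g}$ times a non-vanishing real analytic factor, where $u$ is the local equation of $E_\alpha$ and $(N_\alpha^f,N_\alpha^g)$ are the multiplicities of $f\circ\pi$ and $g\circ\pi$ along $E_\alpha$; consequently, the preimage of $\partial L_t$ in $\widetilde X$, intersected with $N_\alpha$, is a circle bundle (with possible multiplicities) over an open piece of $E_\alpha$, i.e.\ a Seifert piece. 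Near each $\Sigma_i$ one uses instead the surface-case description of the Milnor fibre of $\fgbar|_{H_i}$, fibred along $\Sigma_i$, which also produces a finite union of Seifert pieces. The gluing takes place along the tori above the intersections $E_\alpha\cap E_\beta$ and along the boundaries of the $\Sigma_i$-collars, so the resulting decomposition of $\partial L_t$ is Waldhausen.

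The main obstacle, and what prevents a direct translation of the holomorphic proofs of \cite{NS,MP3}, is that $\fgbar$ carries a genuine real analytic polar locus, on which $df$ and $dg$ become $\bC$-linearly dependent with a real negative ratio, and this locus is not detected by the embedded resolution of $V(fg)$. Verifying that this polar locus does not approach the tube $(\fgbar)^{-1}(\BD_\eta^*)$ for $\eta$ small is precisely what the numerical criterion on each $H_i$ encodes, and promoting this information from each transversal slice to the total space is the technical heart of the argument. Once this is in place, the Waldhausen graph of $\partial L_t$ is read off combinatorially from the dual graph of $\pi$ decorated with the pairs $(N_\alpha^f,N_\alpha^g)$, together with the Seifert data coming from the surface Milnor fibres $\fgbar|_{H_i}$.
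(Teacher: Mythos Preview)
Your proposal has a genuine gap in part~$(i)$ and takes a structurally different (and, as written, unworkable) route in part~$(ii)$.

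\textbf{Part $(i)$.} Your sufficiency argument targets the wrong obstruction. The hypothesis that $\fgbar$ has an isolated critical value already guarantees that the fibres $(\fgbar)^{-1}(t)$ are smooth for $t\in\BD_\eta^*$; showing that $\fgbar\circ\pi$ has no interior critical points on a three-dimensional resolution adds nothing. What Ehresmann actually requires, and what fails in Parusinski's example, is \emph{transversality of $(\fgbar)^{-1}(t)$ with the boundary $\partial\Delta_{\underline\e}$}. The paper addresses this directly, with no three-dimensional resolution at all: away from $\Sigma\cap\partial\Delta_{\underline\e}$ the local form $w^a\bar u$ (or $\bar w^a u$) of $\fgbar$ forces the limits of tangent planes $T_{q_n}(\fgbar)^{-1}(\fgbar(q_n))$ to converge to $T_q(fg)^{-1}(0)$, giving boundary transversality there; near a point $q\in\Sigma\cap\partial\Delta_{\underline\e}$, since $\Sigma$ meets the face $\{|z_1|=\e_1\}$ transversally, boundary transversality is literally equivalent to the restriction of $\fgbar$ to the slice $\{z_1=z_1(q)\}$ having an isolated critical value --- which is exactly the hypothesis on $H_i$. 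Your Curve-Selection/Ehresmann line never engages this boundary issue, and your ``polar locus not approaching the tube'' remark is again about interior critical points, not about $\partial\Delta_{\underline\e}$.

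\textbf{Part $(ii)$.} You propose a single three-dimensional embedded resolution of $V(fg)\subset X$ and a plumbing along neighbourhoods $N_\alpha$ of its exceptional components $E_\alpha$. But $L_t$ lives on $\partial\Delta_{\underline\e}$; away from $L(\Sigma)$ the map $\pi$ is an isomorphism over $\partial\Delta_{\underline\e}$, so there are no exceptional $E_\alpha$ to plumb along, and the assertion that $L_t\cap N_\alpha$ is ``a circle bundle over an open piece of $E_\alpha$'' is dimensionally incoherent (each $E_\alpha$ is a complex surface). The paper's architecture is quite different. The \emph{exterior part} $L_t\setminus W$, outside a vanishing zone $W$ around $L(\Sigma)$, is shown to be a finite unramified cover of $L_0\setminus W_0$; the latter is Waldhausen via the \emph{normalization} of $(fg)^{-1}(0)$, and a short lemma (finite covers of Waldhausen manifolds are Waldhausen) finishes that piece. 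For the \emph{vanishing zone} the paper does not use a three-dimensional resolution either: it takes a \emph{simultaneous family of two-dimensional minimal resolutions} of the transversal curves $fg|_{H_s}$, parametrised by $s\in L(\Sigma_l)$, and proves that the dissection of the transversal Milnor fibre established in Section~\ref{section_2} (into cylinders and finite covers of multi-punctured disks) is invariant under the vertical monodromy. The resulting pieces $\tilde W_t[l]\cap T_i$ and $\tilde W_t[l]\cap T_{ij}$ are then either $\BS^1$-bundles with cylinder fibre (hence Seifert) or finite covers of sets $A_i\cap T_i$, and a further argument --- trivialising a $\bP^1$-bundle via three of its sections and recognising the remaining pieces as complements of algebraic links --- shows these are Waldhausen. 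Your sketch gestures at the transversal slice near $\Sigma_i$ but does not isolate this monodromy-invariant decomposition, which is the technical core of the proof.
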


In $(i)$ of the theorem above we give necessary and sufficient conditions for $\fgbar$ defined in higher dimensions to have a Milnor fibration in the tube. We prove this in section \ref{section_3}.

The proof of $(ii)$ of Theorem \ref{theo_main} provides a shorter proof of the theorem of Michel-Pichon-Weber and Nemethi-Szilard (for the reduced holomorphic case) and generalizes to real analytic map-germs of the form $\fgbar$, even when either $f$ or $g$ is not reduced. The main new technique is a careful study of the vertical monodromies associated to the irreducible components of the singular set, by means of a careful dissection of the transversal Milnor fibre. The proof is based in a detailed understanding of the Milnor fibre of a map-germ of the form $\fgbar$ defined in the complex plane, in terms of an embedded resolution of $\{fg=0\}$. 

Precisely, in section \ref{section_2} we carefully dissect the Milnor fibre of a map-germ $\fgbar: (X,0) \to (\bC,0)$ (defined on a complex analytic germ of surface) into simple pieces, guided by the structure of an embedded resolution of the complex curve $fg$, in such a way that each piece is either a finite disjoint union of cylinders or a finite covering over a multi-punctured disk.

Then in section \ref{section_4} we proceed to the $3$-dimensional case by means of a slicing argument. We first split the boundary of the Milnor fibre of $\fgbar: (X, 0) \to (\bC, 0)$ into two parts:
one part that is inside a tubular neighbourhood of the link of the critical set of $\fgbar$ in the Milnor polydisc (called the vanishing zone), and other part that is away of this neighbourhood.
We first show that this second part of the Milnor fibre is a Waldhausen manifold, and since these two parts glue together along a finite union of tori, it just remains to show that the part of the Milnor fibre
of $\fgbar$ which lies inside the vanishing zone is a Waldhausen manifold. We do so by means of a slicing argument: we describe it as a fibre bundle over a circle whose fibre is the Milnor fibre of the restriction of
$\fgbar$ to a hyperplane section of $X$, with the decomposition given in section \ref{section_2}, and whose monodromy respects such decomposition.

The description of the Milnor fibre of a germ of the form $\fgbar$ defined in a complex surface $Y$ (section \ref{section_2}) also allows us to generalize (for real analytic map-germs of the type $\fgbar$ defined
in a complex surface) the A'Campo formula for computing the zeta-function of the monodromy, in terms of an embedded resolution of singularities. This is Theorem~\ref{funczeta} of section \ref{section_5}.

A natural next step would be to give an algorithm to determine the graph associated to the boundary of the Milnor fibre of functions $f\bar g$ generalizing the algorithm of~\cite{NS} to this kind of real analytic germs.

The authors thank A. Pichon and J. Seade, who suggested the problem and held useful conversations with us, and specially to A. Pichon who detected a gap in a previous version of this paper. We also acknowledge A. Nemethi for a fruitful discussion.


\section{A description of the Milnor fibre of $\fgbar$ defined in a germ of complex surface in terms of its embedded resolution}
\label{section_2}

Let $(X,0)$ be a germ of isolated singularity complex surface at the origin  and let $f$ and $g$ be holomorphic function-germs defined in $(X,0)$ such that $V(f):=f^{-1}(0)$ and $V(g):=g^{-1}(0)$ have no common irreducible components and such that the real analytic map-germ $\fgbar: (X,0) \to (\bC,0)$ has an isolated critical value. We can define $F_\eta$ to be its Milnor fibre, as discussed in the previous section.

Let $\pi: \tilde{M} \to \B_\e$ be a minimal embedded resolution of the complex curve $fg$. Then consider:
\begin{itemize}
\item[$\bullet$] $\pi^{-1} (0) := E = \bigcup_{i=1}^s E_i$, the exceptional divisor with its decomposition into irreducible components;
\item[$\bullet$] $(\fgbar \circ \pi)^{-1}(0) = (fg \circ \pi)^{-1}(0) = \sum_{i=1}^s k_i E_i + \tilde{C}$ the total transform, where $\tilde{C}$ is the strict transform, which has a decomposition into connected components $\tilde{C}= \bigcup_{p=1}^w \tilde{C_p}$;
\item[$\bullet$] $F_\eta\cong (\fgbar \circ \pi)^{-1}(\eta)$, the Milnor fibre of $\fgbar$.
\end{itemize}

For each $i$ with $1\leq i \leq s$ let $U_i$ be a closed tubular neighbourhood of $E_i$ in $\tilde{M}$, and for each $p$ with $1\leq p \leq w$ let $\tilde{U_p}$ be a closed tubular neighbourhood of $\tilde{C_p}$ in $\tilde{M}$. Then we define the following closed sets (see figure \ref{figT10}):
\begin{itemize}
\item[$\bullet$] $V_{ij}:= U_i \cap U_j$;
\item[$\bullet$] $\tilde{V}_{ip}:= U_i \cap \tilde{U}_p$;
\item[$\bullet$] $V_i := U_i \backslash \big( \bigcup_{\substack{i,j=1 \\ i \neq j}}^s V_{ij} \cup \bigcup_{p=1}^w \tilde{V}_{ip} \big)$;
\item[$\bullet$] $\tilde{V}_p := \tilde{U}_p \backslash \bigcup_{i=1}^s \tilde{V}_{ip}$.
\end{itemize}

\begin{figure}[!h] 
\centering 
\includegraphics[scale=0.5]{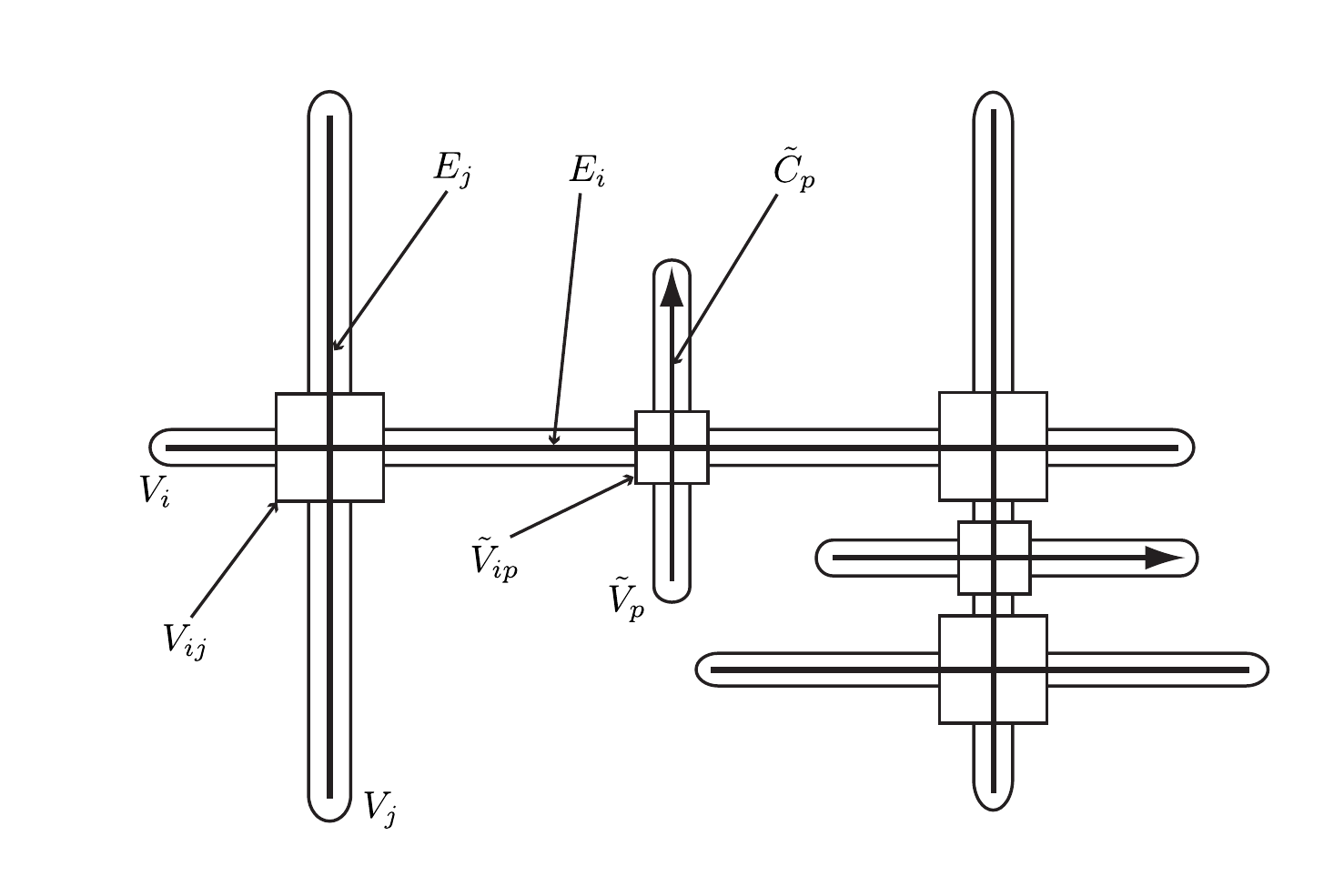}
\caption{}
\label{figT10}
\end{figure}

We decompose the Milnor fibre $F_\eta$ as follows:
$$F_\eta = \biggl( \bigcup_{i=1}^s \big( V_i \cap F_\eta \big) \biggr) \cup \biggl( \bigcup_{\substack{i,j =1 \\ i \neq j}}^s \big( V_{ij} \cap F_\eta \big) \biggr) \cup \biggl( \bigcup_{p=1}^w \big( \tilde{V}_p \cap F_\eta \big) \biggr) \cup \biggl( \bigcup_{\substack{1\leq i \leq s\\ 1 \leq p \leq w}} \big( \tilde{V}_{ip} \cap F_\eta \big) \biggr)$$

After making suitable coordinate changes, each part $V_{ij} \cap F_\eta$ (or $\tilde{V}_{ip} \cap F_\eta$, in a similar way) of the Milnor fibre $F_\eta$ has equation of the form 
$$x^{a_i} \bar{x}^{b_i} y^{a_j} \bar{y}^{b_j} \varphi_1 \overline{\varphi_2} = \eta$$
and each part $V_{i} \cap F_\eta$ (or $\tilde{V_p} \cap F_\eta$, in a similar way) of the Milnor fibre has local equation of the form 
$$x^{a_i} \bar{x}^{b_i} \varphi_1 \overline{\varphi_2} = \eta,$$
where $\varphi_1(x,y)$ and $\varphi_2(x,y)$ are units in $\bC\{x,y\}$, $a_i$ is the multiplicity of $f$ at $E_i$ and $b_i$ is the multiplicity of $g$ at $E_i$
(clearly either $a_p=1$ and $b_p=0$ or $a_p=0$ and $b_p=1$, for each $p= 1, \dots w$).

The pieces $V_i \cap F_\eta$ such that $a_i$ is equal to $b_i$ are more difficult to analyze. In order to make this task easier we will prove a preliminary result. Consider the dual graph of the
minimal embedded resolution of $(fg)^{-1}(0)$. Vertices are in bijection with irreducible components of the exceptional divisor. Two vertices are joined by an edge if and only if the associated exceptional divisors meet. 
We consider an arrowhead for each irreducible component of the strict transform of $(fg)^{-1}(0)$. We join each arrowhead with an edge to the vertex corresponding to the only irreducible component of 
the exceptional divisor that is met by the component of the strict transform represented by the arrowhead. The valency of a vertex is the number of incident edges. A vertex is a rupture vertex if its valency is at least
$3$.

\begin{lemma} \label{lemma0}
If $a_i$ is equal to $b_i$ then the valency of the vertex corresponding to $E_i$ equals $2$. Moreover no two components $E_i$ and $E_j$ with $a_i=b_i$ and $a_j=b_j$ have common intersection.
\end{lemma}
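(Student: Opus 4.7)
The plan is to exploit the intersection-theoretic relations on the resolution together with the minimality of $\pi$. First I would record the two adjunction-type identities $\pi^{*}(f)\cdot E_{i}=0$ and $\pi^{*}(g)\cdot E_{i}=0$, which yield
$$a_{i}E_{i}^{2}+\sum_{j\neq i}a_{j}(E_{i}\cdot E_{j})+m_{f}(i)=0,$$
where $m_{f}(i)=(\tilde{V}(f)\cdot E_{i})$ is the weighted $f$-arrow count at $E_{i}$, and an analogous equation with $m_{g}(i)$ for the $b_{j}$. Subtracting gives an equation for the imbalance $c_{i}:=a_{i}-b_{i}$,
$$c_{i}E_{i}^{2}+\sum_{j\neq i}c_{j}(E_{i}\cdot E_{j})=m_{g}(i)-m_{f}(i).$$

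The main tool is the meromorphic function $h:=\pi^{*}(f/g)$ on $\tilde{M}$, whose order along $E_{i}$ equals $c_{i}$. When $c_{i}=0$ the restriction $h|_{E_{i}}\colon E_{i}\simeq\bP^{1}\to\bP^{1}$ is a well-defined rational function: its zeros are located at the $f$-arrow attachment points of $E_{i}$ (with the multiplicity of $f$ along the corresponding branch) and at the intersections $E_{i}\cap E_{j}$ with $c_{j}>0$ (with multiplicity $c_{j}$), while its poles sit symmetrically at the $g$-arrow attachment points and at the neighbours with $c_{j}<0$. The no-common-components hypothesis on $V(f)$ and $V(g)$ guarantees that a single attachment point on $E_{i}$ cannot simultaneously support a zero and a pole.

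For the first assertion I would assume $c_{i}=0$ and that $E_{i}$ has valency at least $3$, and derive a contradiction. A case analysis at the blow-up step that creates $E_{i}$ (where $a_{i}$ and $b_{i}$ equal, respectively, the multiplicities at the blown-up point of the strict transforms of $V(f)$ and $V(g)$, augmented by the contributions of the previous exceptional components through that point), combined with the minimality of $\pi$ (which forces any $(-1)$-curve in $E$ to have valency $\geq 3$ and any curve of valency $\leq 2$ to be a $(-n)$-curve with $n\geq 2$), should rule out a balanced rupture: such a vertex would place $V(f)$ and $V(g)$ in a locally symmetric configuration incompatible with their having no common branches. The degree identity for $h|_{E_{i}}$ furnishes the quantitative version of this balance.

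For the second assertion, suppose $E_{i}\cap E_{j}\neq\emptyset$ with $c_{i}=c_{j}=0$. At the intersection point $p$ the function $h$ has local form $u_{f}/u_{g}$ with $u_{f},u_{g}$ units, hence is regular and nonzero at $p$; applying the first part of the lemma to both $E_{i}$ and $E_{j}$ then forces a compatibility between the $f$- and $g$-branches across the edge $E_{i}$--$E_{j}$ that contradicts the no-common-components hypothesis. The main difficulty I foresee is the case analysis in the third paragraph: the interplay of arrow multiplicities, neighbour imbalances, and the minimality constraints on the dual graph has to be carried out delicately, and this is where the bookkeeping becomes subtle.
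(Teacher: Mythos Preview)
Your plan has a genuine gap in the first assertion. You propose to rule out valency $\geq 3$ at a balanced component $E_i$ using only the no-common-components hypothesis together with minimality, but that hypothesis is not what excludes balanced rupture vertices. In fact, by the Pichon--Seade criterion cited in the paper, the existence of a rupture component with $a_i=b_i$ is \emph{equivalent} to $f\bar g$ having a non-isolated critical value; so without invoking the standing assumption that $f\bar g$ has an isolated critical value, balanced rupture vertices can and do occur (Parusinski's example restricted to a transversal slice gives exactly such a situation, with $f$ and $g$ still coprime). Your degree identity for $h|_{E_i}$ only says that the total zero order equals the total pole order on $E_i\cong\bP^1$, a single relation that places no obstruction on having three or more marked points. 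The paper bypasses this by quoting \cite{PS} directly for the valency $\leq 2$ bound.

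You also do not address valency $1$. Here your tool would actually work: if $c_i=0$ and $E_i$ has a unique neighbour (divisor or arrow), then $h|_{E_i}$ would be a rational function on $\bP^1$ with a single zero or pole and nothing to balance it, which is impossible unless that neighbour is itself balanced --- and this feeds into the second assertion. The paper handles both valency $1$ and the adjacency statement simultaneously by a clean linear-algebra argument: writing the relations $M\cdot(a-b)+u=0$ for the intersection matrix $M$ of a maximal balanced chain against its unique external neighbour $E_l$, one reads off $a_l=b_l$ from the first row, contradicting maximality. Your sketch for the second assertion (``compatibility across the edge contradicts no common components'') is not an argument as stated; the actual contradiction is combinatorial, coming from propagating $c=0$ along a chain in a finite tree, and does not involve the no-common-components hypothesis at all.
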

\begin{proof}
A. Pichon and J. Seade proved in \cite{PS} that $a_i=b_i$ implies that $E_i$ does not represent a rupture vertex of the dual graph of the embedded resolution. 

Now let $S$ be the union of all the exceptional divisors $E_i$ such that $a_i=b_i$ and consider its decomposition into connected components $S = S_1 \cup \dots \cup S_n$. 
For each $k$ we claim that all the vertices associated to divisors in $S_k$ have valency $2$. Suppose that there exists a component in $S_k$ whose vertex has valency equal to $1$.
Consider the subgraph containing the vertices associated to $S_k$ and the vertex corresponding to the unique $E_l \notin S_m$ such that $E_{i_1} \cap E_l \neq \emptyset$, for some $E_{i_1} \in S_m$ 
(note that $a_l \neq b_l$). Order the divisors of $S_k$ as $E_{i_1},...,E_{i_r}$ in such a way that $E_{i_j}$ meets $E_{i_{j+1}}$. 

If $M$ denotes the $\big(r \times (r+1) \big)$-intersection matrix of $S_k \cup E_l$ and $S_k$, that is,
$$
M =
\begin{pmatrix}
1 & E_{i_1}^2 & 1 & 0 & \cdots & 0 & 0\\
0 & 1 & E_{i_2}^2 & 1 & \cdots & 0 & 0\\
0 & 0 & 1 & E_{i_3}^2 & \cdots & 0 & 0\\
\vdots & \vdots & \vdots & \vdots & \ddots & \vdots &\vdots \\
0 & 0 & 0 & 0 & \cdots & E_{i_{r-1}}^2 & 1\\
0 & 0 & 0 & 0 & \cdots & 1 & E_{i_r}^2
\end{pmatrix}
,$$
and if $u_i$ denotes the number of intersection points between $E_i$ and the strict transform of $f$ minus the number of intersection points between $E_i$ and the strict transform of $g$, then we have
(see \cite{EN}, Theorem $18.2$):
$$M \cdot
\begin{pmatrix}
a_l-b_l\\
a_{i_1}-b_{i_1}\\
\vdots \\
a_{i_r}-b_{i_r}
\end{pmatrix}
+
\begin{pmatrix}
u_{i_1}\\
\vdots \\
u_{i_r}
\end{pmatrix}
=
\begin{pmatrix}
0\\ 
\vdots \\
0
\end{pmatrix}
$$
By assumption, the valency of $E_{i_j}$ equals two for $j<r$ and $1$ for $j=r$. This implies that $u_{i_j}=0$ for any $1\leq j\leq r$. Hence we have
$$
\begin{pmatrix}
1 & E_{i_1}^2 & 1 & \cdots & 0 & 0\\
\vdots & \vdots & \vdots & \ddots & \vdots &\vdots \\
0 & 0 & 0 & \cdots & E_{i_{r-1}}^2 & 1\\
0 & 0 & 0 & \cdots & 1 & E_{i_r}^2
\end{pmatrix}
\cdot
\begin{pmatrix}
a_l-b_l\\
0\\
\vdots \\
0\\
0
\end{pmatrix}
+
\begin{pmatrix}
0\\
\vdots \\
0
\end{pmatrix}
=
\begin{pmatrix}
0\\
\vdots \\
0
\end{pmatrix}
,$$
which implies that $a_l=b_l$, a contradiction. In fact, it follows that there cannot be two consecutive vertices with $a_i = b_i$,
that is, for each $m$ one has $S_m = E_{i_1}$. 
\end{proof}

The proof of the next Lemma involves some computations which are surprisingly involved if one compares with the holomorphic case. 
They appear because in our real analytic setting the local equations for $f\bar g$ involve units $\varphi_i$ which cannot be avoided by a coordinate change as in the holomorphic setting.

\begin{lemma} \label{lemma}
The intersection of the Milnor fibre $F_\eta$ with each neighbourhood $V_i$, $V_{ij}$, $\tilde{V_p}$ or $\tilde{V_{ip}}$ is homeomorphic to either a 
finite disjoint union of cylinders (cases $(i)$, $(iii)$ and $(iv)$ in the proof) or a finite covering over a multi-punctured disk (case $(ii)$ in the proof).
\end{lemma}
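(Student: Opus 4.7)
The plan is to handle each of the four neighbourhood types ($V_i$, $V_{ij}$, $\tilde V_p$, $\tilde V_{ip}$) separately, by projecting the corresponding piece of $F_\eta$ onto a lower-dimensional base with manifest topology and then reading off the fibres from the explicit local equation. Throughout the argument the main technical friction is that, in contrast with the holomorphic setting, the units $\varphi_1\overline{\varphi_2}$ cannot be absorbed into a holomorphic change of coordinates, so every simplification must be carried out by a continuous deformation that preserves the homeomorphism type, together with a judicious use of the implicit function theorem.

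First I would dispose of the two easy cases. For $\tilde V_p\cap F_\eta$, choose a local chart in which $\tilde C_p=\{y=0\}$, turning the equation into $y\varphi_1\overline{\varphi_2}=\eta$ (or its complex conjugate, according to whether $a_p$ or $b_p$ equals $1$). The implicit function theorem writes $y$ as a unique small function of the base point on $\tilde C_p\cap\tilde V_p$; since this base is an annulus, the piece is a single cylinder. For $\tilde V_{ip}\cap F_\eta$ the equation becomes $x^{a_i}\bar x^{b_i}y\varphi_1\overline{\varphi_2}=\eta$; solving implicitly for $y$ and imposing $|y|\le\e$ confines $|x|$ to an annulus in the $x$-disk, so this piece is also a cylinder.

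Case (i) comes next. Near $E_i\cap E_j$ the equation reads
$$x^{a_i}\bar x^{b_i}y^{a_j}\bar y^{b_j}\varphi_1\overline{\varphi_2}=\eta,$$
and I would first deform the units to constants along a path through nonvanishing functions to reduce to the monomial model $x^{a_i}\bar x^{b_i}y^{a_j}\bar y^{b_j}=\eta$. In polar coordinates $x=re^{i\theta}$, $y=\rho e^{i\phi}$ this splits into a modulus equation $r^{a_i+b_i}\rho^{a_j+b_j}=|\eta|$ defining a smooth arc in the $(r,\rho)$-square, and an argument equation $(a_i-b_i)\theta+(a_j-b_j)\phi\equiv\arg\eta\pmod{2\pi}$ on $T^2$. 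Lemma~\ref{lemma0} rules out $(a_i-b_i,a_j-b_j)=(0,0)$, so the argument locus is a disjoint union of $\gcd(|a_i-b_i|,|a_j-b_j|)$ circles, and together with the arc this yields the asserted disjoint union of cylinders.

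The last and most delicate step is case (ii), $V_i\cap F_\eta$. In coordinates with $E_i=\{x=0\}$ locally and $y$ parametrizing a point of $E_i$, the equation is $x^{a_i}\bar x^{b_i}\varphi_1\overline{\varphi_2}=\eta$, and I would project along $y$. When $a_i\ne b_i$, the one-variable map $x\mapsto x^{a_i}\bar x^{b_i}$ is $|a_i-b_i|$-to-one away from the origin, so over each $y\in E_i\cap V_i$ the fibre consists of exactly $|a_i-b_i|$ points; by the implicit function theorem these assemble into a $|a_i-b_i|$-sheeted covering of the multi-punctured sphere $E_i\cap V_i$. When $a_i=b_i$, Lemma~\ref{lemma0} forces $E_i$ to have valency $2$, so $E_i\cap V_i$ is an annulus and $x^{a_i}\bar x^{b_i}=|x|^{2a_i}$ becomes real-valued; this is where I expect the main difficulty of the proof, as the units $\varphi_1\overline{\varphi_2}$ cannot be rescaled away and their argument now enters the fibre equation directly. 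The strategy is to fix $|x|$ from the modulus equation while $\arg x$ remains free, and to show that the argument condition $\arg\varphi_1-\arg\varphi_2\equiv\arg\eta$ cuts out a codimension-one real subset of the base; taken together these should present $F_\eta\cap V_i$ as a finite covering of the annulus whose connected components are cylinders, in agreement with the statement of the lemma.
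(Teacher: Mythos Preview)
Your overall strategy---project each piece onto a simple base and read off the fibre---is the paper's strategy too, and your treatment of $\tilde V_p$, $\tilde V_{ip}$ and of $V_i$ with $a_i\neq b_i$ is correct and close to the paper's case~(ii). Your polar-coordinate description of the monomial fibre in $V_{ij}$ is a clean way to see the cylinders once the units are gone; the paper reaches the same model via the same one-parameter deformation $h_t$, but it spends the bulk of the proof verifying the hypotheses of Ehresmann's lemma (submersion on the interior and on each boundary face of the bidisc), which you omit.

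That omission is where the first real gap lies. In your case~(i) you treat all $V_{ij}$ uniformly, invoking Lemma~\ref{lemma0} only to exclude $(a_i-b_i,a_j-b_j)=(0,0)$. But the deformation of the units is \emph{not} justified when exactly one of $a_i-b_i$, $a_j-b_j$ vanishes: the boundary-transversality check on the face $|y|=\e_2$ reduces to a determinant whose leading term is $(a_i^2-b_i^2)|\varphi_{1,t}\varphi_{2,t}|^2$, which is identically zero when $a_i=b_i$, so Ehresmann does not apply. The paper separates this out as its case~(iii) and uses a different mechanism: since $\det\left(\begin{smallmatrix}a_i&a_j\\ b_i&b_j\end{smallmatrix}\right)\neq 0$ in that situation, one can solve for a \emph{holomorphic} coordinate change $(x,y)\mapsto(x\theta_1,y\theta_2)$ absorbing both units at once, reducing to $x^{a_i}\bar x^{b_i}|y|^{2b_j}=\eta$ with no deformation needed.

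The second gap is your handling of $V_i$ with $a_i=b_i$ (the paper's case~(iv)). Your proposed route---fix $|x|$ from the modulus and impose $\arg\varphi_1-\arg\varphi_2\equiv\arg\eta$ as a codimension-one condition on the base---does not work: near $E_i$ the units are close to constants $\alpha,\beta$, so $\arg(\varphi_1\overline{\varphi_2})$ is nearly constant and the condition is either nowhere satisfied or not transverse, depending on $\arg\eta$. The paper abandons local computation entirely here and argues topologically: by Lemma~\ref{lemma0} the valency of $E_i$ is $2$, and one checks that the piece $(\fgbar\circ\pi)^{-1}(\partial\BD_\eta)\cap V_i$ of the Milnor tube has the homotopy type of a $2$-torus. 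The long exact sequence of the fibration over $\partial\BD_\eta$ then gives $0\to\pi_1(F_\eta\cap V_i)\to\bZ\times\bZ\to\bZ\to 0$, so each connected component has $\pi_1\cong\bZ$; since it has two boundary circles, it is a cylinder. This global homotopy argument is the idea you are missing.
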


\begin{proof}
There are four cases to consider:
\\
\begin{itemize}

\item[$(i)$] The intersection $F_\eta \cap V_{ij}$ when $a_i \neq b_i$ and $a_j \neq b_j$ (and in a similar way for the intersection $F_\eta \cap \tilde{V}_{ip}$
when $a_i \neq b_i$): locally, we have 
$$(\fgbar \circ \pi) = x^{a_i} \bar{x}^{b_i} y^{a_j} \bar{y}^{b_j} \varphi_1 \overline{\varphi_2},$$
where 
$$
\begin{cases}
\varphi_1(x,y):= \a + \psi_1(x,y) \\
\varphi_2(x,y):= \b + \psi_2(x,y)
\end{cases} 
,$$
with $\alpha,\beta \in \bC \backslash \{0\}$ and $\psi_1(0) = \psi_2(0) =0$.
Setting
$$
\begin{cases}
\varphi_{1,t}(x,y):= \a + t \psi_1(x,y) \\
\varphi_{2,t}(x,y):= \b + t \psi_2(x,y)
\end{cases} 
,$$
we define the $1$-parameter family
$$h_t := f_t \overline{g_t} = x^{a_i} \bar{x}^{b_i} y^{a_j} \bar{y}^{b_j} \varphi_{1,t} \overline{\varphi_{2,t}},$$
defined on $V_{ij}$, which gives a homotopy between $h_1 = (\fgbar \circ \pi)$ and $h_0 = x^{a_i} \bar{x}^{b_i} y^{a_j} \bar{y}^{b_j} \a \bar{\b}$. 
We want to show that $h_1^{-1}(\eta)$ is homeomorphic to $h_0^{-1}(\eta)$.

Consider the real analytic mapping
$$
\begin{matrix}
H: & V_{ij} \times [0,1] & \to & \bC \times [0,1] \\
    & (z,t) & \mapsto & \big( h_t(z), t \big)
\end{matrix}
$$
and define
$$M := \big( V_{ij} \times [0,1] \big) \cap H^{-1} \big( \BD_\eta^* \times [0,1] \big).$$
Then consider the restriction
$$H|_M: M \to \BD_\eta^* \times [0,1].$$
We will show that $H|_M$ is the projection of a locally trivial fibration, and then $h_1^{-1}(\eta)$ is homeomorphic to the set
$$\left\{ x^{a_i} \bar{x}^{b_i} = \frac{\eta}{y^{a_j} \bar{y}^{b_j}}  \right\}\cap (D_{\epsilon_1}\times D_{\epsilon_2}),$$
for some small positive real $\e_1, \e_2 << \e$, which is a covering over an annulus, and therefore it is a disjoint union of cylinders.
\\
The proof that $H|_M$ is a locally trivial fibration follows from the Ehresmann's Fibration Lemma, since:

\begin{itemize}
\item[$\bullet$] $H|_M$ is clearly proper;
\item[$\bullet$] $H|_M$ is a submersion: the Jacobian matrix of $H$ is given by
$$
\begin{pmatrix}

\frac{\partial h_t}{\partial x} & \frac{\partial h_t}{\partial \bar{x}} & \frac{\partial h_t}{\partial y} & \frac{\partial h_t}{\partial \bar{y}} & \frac{\partial h_t}{\partial t}  \\

\frac{\partial \overline{h_t}}{\partial x} & \frac{\partial \overline{h_t}}{\partial \bar{x}} & \frac{\partial \overline{h_t}}{\partial y} & \frac{\partial \overline{h_t}}{\partial \bar{y}} & \frac{\partial \overline{h_t}}{\partial t} \\

\frac{\partial t}{\partial x} & \frac{\partial t}{\partial \bar{x}} & \frac{\partial t}{\partial y} & \frac{\partial t}{\partial \bar{y}} & \frac{\partial t}{\partial t}

\end{pmatrix}
= 
\begin{pmatrix}

\frac{\partial f_t}{\partial x} \overline{g_t} & f_t \overline{\frac{\partial g_t}{\partial x}} & \frac{\partial f_t}{\partial y} \overline{g_t} & f_t \overline{\frac{\partial g_t}{\partial y}} & x^{a_i} y^{a_j} \psi_1 \bar{g_t} \\

\bar{f_t} \frac{\partial g_t}{\partial x} & \overline{\frac{\partial f_t}{\partial x}} g_t & \bar{f_t} \frac{\partial g_t}{\partial y} & \overline{\frac{\partial f_t}{\partial y}}g_t & x^{b_i} y^{b_j} \psi_2 \bar{f_t} \\

0 & 0 & 0 & 0 & 1 \\

\end{pmatrix}
$$ 

Then $H|_M$ is a submersion in a point $p$ if, and only if, $p$ is not a solution of at least one of the following equations:
$$
\begin{cases}
(1) & |\frac{\partial f_t}{\partial x}|^2 |g_t|^2 -  |\frac{\partial g_t}{\partial x}|^2 |f_t|^2 =0  \\
(2) & |\frac{\partial f_t}{\partial y}|^2 |g_t|^2 -  |\frac{\partial g_t}{\partial y}|^2 |f_t|^2 =0  \\
(3) & |f_t|^2 \frac{\partial g_t}{\partial x} \overline{\frac{\partial g_t}{\partial y}} - |g_t|^2 \frac{\partial f_t}{\partial x} \overline{\frac{\partial f_t}{\partial y}} =0 \\ 
(4) & fg \big( \frac{\partial f_t}{\partial x}\frac{\partial g_t}{\partial y} - \frac{\partial f_t}{\partial y}\frac{\partial g_t}{\partial x} \big) =0
\end{cases}
$$

Note that $f_t= x^{a_i} y^{a_j} \varphi_{1,t}$ and $g_t= x^{b_i} y^{b_j} \varphi_{2,t}$. Then setting
$$
\begin{cases}
\zeta_1 = a_i \varphi_{1,t} + x \frac{\partial \varphi_{1,t}}{\partial x} \\
\zeta_2 = a_j \varphi_{1,t} + y \frac{\partial \varphi_{1,t}}{\partial y} \\
\zeta_3 = b_i \varphi_{2,t} + x \frac{\partial \varphi_{2,t}}{\partial x} \\
\zeta_4 = b_j \varphi_{2,t} + y \frac{\partial \varphi_{2,t}}{\partial y}
\end{cases}
$$
we have that
$$
\begin{cases}
\frac{\partial f_t}{\partial x} = x^{a_i-1} y^{a_j} \zeta_1 \\
\frac{\partial f_t}{\partial y} = x^{a_i} y^{a_j-1} \zeta_2 \\
\frac{\partial g_t}{\partial x} = x^{b_i-1} y^{b_j} \zeta_3 \\
\frac{\partial g_t}{\partial y} = x^{b_i} y^{b_j-1} \zeta_4 \\
\end{cases}
$$

Substituting on equations $(1)$ to $(4)$, we have the equations:
$$
\begin{cases}
(1) & |x|^{2(a_i+b_i-1)} |y|^{2(a_j+b_j)}
\begin{vmatrix}
|\zeta_1|^2 & |\zeta_3|^2 \\
|\varphi_{1,t}|^2 & |\varphi_{2,t}|^2
\end{vmatrix}
=0 \\
  
(2) & |x|^{2(a_i+b_i)} |y|^{2(a_j+b_j-1)}
\begin{vmatrix}
|\zeta_2|^2 & |\zeta_4|^2 \\
|\varphi_{1,t}|^2 & |\varphi_{2,t}|^2
\end{vmatrix}
=0 \\

(3) & \bar{x} y |x|^{2(a_i+b_i-1)} |y|^{2(a_j+b_j-1)}
\begin{vmatrix}
\zeta_1 \overline{\zeta_2} & \zeta_3 \overline{\zeta_4} \\
|\varphi_{1,t}|^2 & |\varphi_{2,t}|^2
\end{vmatrix}
=0 \\

(4) & x^{2a_i+2b_i-1} y^{2a_j+2b_j-1} \varphi_{1,t} \varphi_{2,t}
\begin{vmatrix}
\zeta_1 & \zeta_2 \\
\zeta_3 & \zeta_4
\end{vmatrix}
=0

\end{cases}
$$

Since
$$ 
\begin{vmatrix}
|\zeta_1(0)|^2 & |\zeta_3(0)|^2 \\
|\varphi_{1,t}(0)|^2 & |\varphi_{2,t}(0)|^2
\end{vmatrix}
= |a_i^2-b_i^2|.|\varphi_{1,t}(0) \varphi_{2,t}(0)|^2
$$

and
$$ 
\begin{vmatrix}
|\zeta_2(0)|^2 & |\zeta_4(0)|^2 \\
|\varphi_{1,t}(0)|^2 & |\varphi_{2,t}(0)|^2
\end{vmatrix}
= |a_j^2-b_j^2|.|\varphi_{1,t}(0) \varphi_{2,t}(0)|^2,
$$
and since by hypothesis $a_i \neq b_i$ and $a_j \neq b_j$, it follows that if $p$ is sufficiently close to $0$, then $p$ cannot be a solution of equations $(1)$ and $(2)$.

\item[$\bullet$] The restriction of $H$ to the boundary of $M$ is a submersion.
Indeed, the tubular neighbourhoods can be chosen so that $V_{ij}$ is the closed polydisk $D_{\epsilon_1}\times D_{\epsilon_2}$ for small $\epsilon_i$.
Note that $\delta<<min\{\epsilon_1,\epsilon_2\}$.
Then $\partial ( \BD_{\e_1} \times \BD_{\e_2}) = (\partial \BD_{\e_1} \times \BD_{\e_2}) \cup (\BD_{\e_1} \times \partial \BD_{\e_2})$ and $h_t^{-1}(\delta)$ intersects $\partial \BD_{\e_1} \times \BD_{\e_2}$ transversally if, and only if, the Jacobian matrix
$$
\begin{pmatrix}

\frac{\partial h_t}{\partial x} & \frac{\partial h_t}{\partial \bar{x}} & \frac{\partial h_t}{\partial y} & \frac{\partial h_t}{\partial \bar{y}}   \\

\frac{\partial \overline{h_t}}{\partial x} & \frac{\partial \overline{h_t}}{\partial \bar{x}} & \frac{\partial \overline{h_t}}{\partial y} & \frac{\partial \overline{h_t}}{\partial \bar{y}}  \\

\frac{\partial x}{\partial x} & \frac{\partial x}{\partial \bar{x}} & \frac{\partial x}{\partial y} & \frac{\partial x}{\partial \bar{y}}  \\

\frac{\partial \bar{x}}{\partial x} & \frac{\partial \bar{x}}{\partial \bar{x}} & \frac{\partial \bar{x}}{\partial y} & \frac{\partial \bar{x}}{\partial \bar{y}}

\end{pmatrix}
$$
has non-zero determinant at $p \neq 0$ small, which happens if equation $(1)$ holds, and therefore if $a_i \neq b_i$. So we conclude that 
$h_t^{-1}(\delta)$ intersects $\partial \BD_{\e_1} \times \BD_{\e_2}$ transversally and, by an analogous argument,
that $h_t^{-1}(\delta)$ intersects $\BD_{\e_1} \times \partial \BD_{\e_2}$ transversally if $a_j \neq b_j$.
\end{itemize}

\item[$(ii)$] The intersection $F_\eta \cap V_i$ when $a_i \neq b_i$ (and in a similar way for the intersection $F_\eta \cap \tilde{V_p}$):
the same computations of case $(i)$, considering $(\fgbar \circ \pi) = x^{a_i} \bar{x}^{b_i} \varphi_1 \overline{\varphi_2}$ show 
that for any point $q\in V_i\cap E_i$ there exists a small neighbourhood $U_q$ of $q$ and a smooth coordinate system $(x,y):U_q\to\mathbb{C}^2$ such that $\fgbar$ is given by 
$\fgbar(x,y)=x^{a_i}\bar{x}^{b_i}\alpha\bar\beta$ with $\alpha,\beta\in\mathbb{C}^*$. Hence for any sequence $\{q_n\}_{n\in\mathbb{N}}$ converging to $q$ we have that the limit of tangent 
planes $T_{q_n}(\fgbar)^{-1}(\fgbar(q_n))$ converges to $T_qE_i$. Therefore if $\pi:V_i\to V_i\cap E_i$ is any smooth projection, for sufficiently small $\eta$ we have that
$$\pi|_{(\fgbar)^{-1}(\eta)\cap V_i}: (\fgbar)^{-1}(\eta)\cap V_i\to E_i$$
is an etale covering of degree $|a_i-b_i|$. Notice that $V_i \cap E_i$ is a multi-punctured disk.

\item[$(iii)$] The intersection $F_\eta \cap V_{ij}$ when $a_i =b_i$ and $a_j \neq b_j$ or when $a_i \neq b_i$ and $a_j = b_j$
(and in a similar way the intersection $F_\eta \cap \tilde{V_{ip}}$ when $a_i=b_i$): consider the map-germ 
$$
\begin{matrix}
(f \circ \pi , g \circ \pi): & (\bC^2,0) & \to & (\bC^2,0) \\
          & (x,y)   & \mapsto & (x^{a_i} y^{a_j} \varphi_1, x^{b_i} y^{b_j} \varphi_2)
\end{matrix}
$$
We want to find a change of coordinates $\Theta = (\Theta_1, \Theta_2): (\bC^2,0) \to (\bC^2,0)$ such that
$$(f \circ \pi ,g \circ \pi) = (x^{a_i} y^{a_j}, x^{b_i} y^{b_j}) \circ \Theta,$$
which happens if and only if $(f \circ \pi ,g \circ \pi) = (\Theta_1^{a_i} \Theta_2^{a_j}, \Theta_1^{b_i} \Theta_2^{b_j})$.
If we set $\Theta_1 = x \theta_1$ and $\Theta_2 = y \theta_2$, with $\theta_1(0) \neq 0$ and $\theta_2(0) \neq 0$, our problem is to find $\theta_1, \theta_2: (\bC^2,0) \to (\bC,0)$ such that
$$(f \circ \pi ,g \circ \pi) = (x^{a_i} y^{a_j} \theta_1^{a_i} \theta_2^{a_j}, x^{b_i} y^{b_j} \theta_1^{b_i} \theta_2^{b_j}).$$
This is possible if, and only if, the system
$$
\begin{cases}
x^{a_i} y^{a_j} \theta_1^{a_i} \theta_2^{a_j} = x^{a_i} y^{a_j} \varphi_1 \\
x^{b_i} y^{b_j} \theta_1^{b_i} \theta_2^{b_j} = x^{b_i} y^{b_j} \varphi_2
\end{cases}
$$
has a solution $(\theta_1,\theta_2)$. This is equivalent to the system
$$
\begin{cases}
\theta_1^{a_i} \theta_2^{a_j} = \varphi_1 \\
\theta_1^{b_i} \theta_2^{b_j} = \varphi_2,
\end{cases}
$$
which has solution if, and only if, the linear system with variables $log\theta_1,log\theta_2$
$$
\begin{cases}
a_i \log \theta_1 + a_j \log \theta_2 = \log \varphi_1 \\
b_i \log \theta_1 + b_j \log \theta_2 = \log \varphi_2
\end{cases}
$$
has solution. But by hypothesis we have that the determinant
$$
\begin{vmatrix}
a_i & a_j \\
b_i & b_j
\end{vmatrix}
\neq 0,
$$
and hence the system has solutions.
Then $(\fgbar \circ \pi) = (x^{a_i} \bar{x}^{b_i} |y|^{2b_i}) \circ \Theta$ and therefore its Milnor fibre is given by the set
$$\{|y|^{2b_j} = \frac{\eta}{x^{a_i} \bar{x}^{b_i}}\}\cap V_{ij},$$
which is a disjoint union of cylinders (which intersect $\partial V_{ij}$ transversely).

\item[$(iv)$] The intersection $F_\eta \cap V_i$ when $a_i = b_i$ (and in a similar way the intersection $F_\eta \cap V_{ij}$ when $a_i=b_i$ and $a_j=b_j$).
Lemma~\ref{lemma0} states that the components $E_{k}$ with $a_k = b_k$ do not meet each other and their associated vertex in the dual graph has valency equal to $2$. 

Set $\BS_\eta := \partial \BD_\eta$, the boundary of the disk $\BD_\eta$. Looking at the resolution $\pi$, it is not difficult to see that the piece 
$$(\fgbar \circ \pi)^{-1}(\BS_\eta)\cap V_k$$
of the Milnor tube $(\fgbar \circ \pi)^{-1}(\BS_\eta)$ has the homotopy type of a torus $\bT^2$. Indeed,
$$(\fgbar \circ \pi)^{-1}(D^*_\eta)\cap V_k$$
is diffeomorphic to $D\times D^*$, where $D$ is a disk.

Since the restriction 
$$(\fgbar \circ \pi)_|: (\fgbar \circ \pi)^{-1}(\BS_\eta) \cap V_k \to \BS_\eta$$
is the projection of a locally trivial fibration, and supposing that $F_\eta \cap V_k$ is connected, we get an exact sequence
$$\pi_2(\BS^1) \to \pi_1(F_\eta \cap V_k) \to \pi_1(\bT^2) \to \pi_1(\BS^1) \to 0,$$
which is isomorphic to
$$0 \to \pi_1(F_\eta \cap V_k) \to \bZ \times \bZ \to \bZ \to 0.$$
Hence $\pi_1(F_\eta \cap V_k)$ is isomorphic to $\bZ$, and since $F_\eta \cap V_k$ has two boundary components (two circles), it follows that $F_\eta \cap V_k$ is a cylinder.\\
If the piece of the Milnor fibre $F_\eta \cap V_k$ has more than one connected component, since the part of the Milnor tube $(\fgbar \circ \pi)^{-1}(\BS_\eta) \cap V_k$ is connected, the monodromy $m$ must define a one-cycle permutation $(F_1, \dots, F_r)$ on the connected components of $F_\eta \cap V_k$. 
In particular, they are all diffeomorphic. For $i \neq r$, let $m_i$ denote the diffeomorphism from $F_i$ to $F_{i+1}$, and let $m_r$ be the diffeomorphism from $F_r$ to $F_1$. Then we can construct a fibre bundle over the circle with total space 
$$(\fgbar \circ \pi)^{-1}(\BS_\eta) \cap V_k$$ 
and with fibre $F_1$ in the following way: consider the circle given by the identification of the boundary of the interval $[0,r]$. The space $(\fgbar \circ \pi)^{-1}(\BS_\eta) \cap V_k$ is diffeomorphic to the quotient of $\coprod_{i=0}^{r-1}[i,i+1]\times F_{i-1}$ by the identification of $(i,x)$ with $(i+1,m_i(x))$ for any $i<r$ and $(r,x)$ with $(0,m_r(x))$. The projection to the circle is now obvious. Then the same arguments above show that $F_1$ is a cylinder.
\end{itemize}
\end{proof}

We will need the following Remark on families of maps of the form $f_t\bar{g}_t$, which is an adaptation of the corresponding result for families of holomorphic functions.

\begin{remark}
\label{lemma_2}
Let $f_t:(X,0)\to\bC$ and $g_t:(X,0)\to\bC$ be two families of holomorphic function-germs defined on a germ of complex surface $X$ and depending holomorphically on a parameter $t$ which varies in a disk $D$.
Suppose that the zero set of each $f_t g_t$ has an isolated singularity at the origin, whose Milnor number is independent of $t$. 
For any $t_0\in D$, there exists a neighbourhood $U$ of $t_0$ in $D$ and a positive real number $\epsilon$ which is a Milnor radius for $f_t\bar{g}_t$, for any $t\in U$.
\end{remark}

\begin{proof}
A Milnor radius for a real analytic germ of the form $f\bar{g}$ is a positive radius $\epsilon$ such that for any other positive radius $\epsilon'\leq\epsilon$, the sphere of radius $\epsilon'$ centered at the origin
 meets $(fg)^{-1}(0)$ transversely. Thus $\epsilon$ is a Milnor radius for the real analytic germ $f\bar{g}$ if and only if it is a Milnor radius for the holomorphic germ $fg$. The assertion of the Remark follows from the corresponding one for $\mu$-constant families of holomorphic function-germs in two variables: its well known that a $\mu$-constant family of plane curves is Whitney equisingular and admits an uniform Milnor radius.
\end{proof}

\section{The Milnor fibration in a tube}
\label{section_3}

Let $(X,0)$ be a $3$-dimensional isolated singularity complex analytic germ at the origin. Let $f$ and $g$ be holomorphic function-germs (not necessarily reduced) defined in $(X,0)$ such that 
$V(f):=f^{-1}(0)$ and $V(g):=g^{-1}(0)$ have no common irreducible components and such that the real analytic map-germ $\fgbar: (X,0) \to (\bC,0)$ has an isolated critical value. Let $\Sigma$ denote the singular set
of the surface $V(fg)$, considered with reduced structure. 

Consider the polydisk 
$$\Delta_{\underline\e}:= \{ (z_1, \dots, z_N) \in \bC^N: |z_1|\leq\e_1, \dots, |z_N|\leq \e_N \}.$$
By choosing the coordinate system and the radii $\epsilon_i$ we may assume that $\partial \Delta_{\underline\e}$ is the union of the faces $|z_i|=\e_i$ for $i=1,2,3$ and that $\Sigma$ meets $\partial \Delta_{\underline\e}$
at the interior of the face $|z_1|=\e_1$ in a transverse way.

In the case that $X$ has complex dimension two, Pichon and Seade proved in \cite{PS} that the real analytic map-germ $\fgbar$ has the Thom $a_f$ condition if and only if it has an isolated 
critical value. Moreover they showed the following numerical criterion at the level of the embedded resolution of the pair $(X,(fg)^{-1}(0))$: a rupture irreducible component of the
total transform of $(fg)^{-1}(0)$ to its embedded resolution of singularities is an irreducible component of the total transform that meets other irreducible components in at least three 
points; $\fgbar$ has the Thom $a_f$ conditions if and only if the multiplicity of $f$ is different to the multiplicity of $g$ at any rupture irreducible component of the total transform.

Thom's $a_f$ condition implies that $\fgbar$ admits a Milnor fibration
in the tube. That is, there exist 
sufficient small positive reals $0 < \eta << \e_i<<1$ such that the restriction 
$$\fgbar_{| (\fgbar)^{-1}(\BD_\eta^*) \cap X \cap \Delta_{\underline\e}}: (\fgbar)^{-1}(\BD_\eta^*) \cap X \cap \Delta_{\underline\e} \to \BD_\eta^*$$
is a locally trivial fibration.

In higher dimensions this result is no longer true, as one can check in the following example, pointed out by Parusinski: take 
$f(z_1,z_2,z_3)= z_1(z_2 + z_3^2)$ and $g(z_1,z_2,z_3)= z_2$. The function $\fgbar$ has an isolated critical value and does not admit a Milnor fibration in the tube. 
We have the following theorem, which gives necessary and sufficient conditions for $\fgbar: (X,0) \to (\bC,0)$ to have a Milnor fibration in the tube in the case that $X$ is of
dimension $3$:

\vskip.2cm
\noindent
{\bf Theorem.} {\it 
For each irreducible component $\Sigma_i$ of the intersection $V(f) \cap V(g)$, let $H_i \subset X$ be a generic transversal section, with $i= 1, \dots, k$. 
Then $\fgbar$ has a Milnor fibration in the tube if and only if 
the restriction of $\fgbar$ to each $H_i$ has isolated critical values (notice that as explained above this is equivalent to an easy to check numerical criterion at the level of the 
embedded resolution of the restriction of $fg$ to $H_i$). 
}

\begin{proof}
We known by Ehresmann Fibration Theorem that $\fgbar$ has a Milnor fibration in the tube if and only if the smooth manifold 
$(\fgbar)^{-1}(t)$ intersects $\partial \Delta_{\underline\e} \cap X$ transversely, for any $t \in \bC$ small. 

At a point $q \in (fg)^{-1}(0) \cap \partial\Delta_{\underline\e}$ which does not belong to $\Sigma$ the function $\fgbar$ has the form either $w^a\bar{u}$ or $\bar{w}^au$, where $u$ is a 
holomorphic function non-vanishing at $q$, $a$ is a positive integer and $w$ is a holomorphic function vanishing and non-critical at $q$. If the choice of the coordinates $z_1,...,z_n$
is generic enough
we may assume that if $q$ belongs to the face $z_i=\e_i$ ($i=1,2,3$) then $w,z_i$ are part of a local coordinate system in $X$ at $q$. 
Similarly if $q$ belongs to the intersection
of the faces $z_i=\e_i$ and $z_j=\e_j$ ($i,j=1,2,3$) then $w,z_i,z_j$ form a local coordinate system of $X$ at $q$. The local form of $\fgbar$ implies immediately that if $\{q_n\}_{n\in\mathbb{N}}$ is any 
sequence outside $(fg)^{-1}(0)$ converging to $q$ we have that $T_{q_n}\big( (\fgbar)^{-1}(\fgbar|_X(q_n) \big)$ converges to $T_q \big( (\fgbar|_X)^{-1}(0) \big)$.  
Taking into account that $w$ forms a coordinate system together with $z_i$ ($i=1,2$), this implies that outside a fixed neighbourhood $U$ of $\Sigma\cap\partial\Delta_{\underline\e}$ in 
$\partial\Delta_{\underline\e}$, if $t\neq 0$ is small enough then $(\fgbar)^{-1}(t)$ intersects $\partial \Delta_{\underline\e} \cap X$ transversely. 
We record, for later use, the following second implication of the property verified above for the limits of tangent planes:\\

($\dagger$) Let $U$ be a fixed tubular neighbourhood of $\Sigma\cap\partial\Delta_{\underline\e}$ in $\partial\Delta_{\underline\e}$. Since 
$(fg)^{-1}(0)\cap \partial\Delta_{\underline\e}\setminus U$ is a smooth submanifold of real codimension $2$ of $\partial\Delta_{\underline\e}\setminus U$ there exists a tubular 
neighbourhood $W$ of $(fg)^{-1}(0)\cap \partial\Delta_{\underline\e}\setminus U$ in $\partial\Delta_{\underline\e}\setminus U$, together with a projection 
$$\rho:W\to (fg)^{-1}(0)\cap \partial\Delta_{\underline\e}\setminus U$$
which is a locally trivial fibration with fibre a disc. For a fixed small enough $\eta\neq 0$ the restriction 
$$\rho|_{(\fgbar)^{-1}(\eta)\cap W}: (\fgbar)^{-1}(\eta)\cap W\to (fg)^{-1}(0)\cap \partial\Delta_{\underline\e}\setminus U$$
is a unramified covering.\\

To finish the proof notice that $\Sigma$ meets $\partial\Delta_{\underline\e}$ at the interior of the face $z_1=\e_1$, and that at any point $q\in \Sigma\cap \partial\Delta_{\underline\e}$
the hyperplane $\{z_1=z_1(q)\}$ meets $\Sigma$ transversely. Thus, transversality of $(\fgbar)^{-1}(\eta)$ with $\partial\Delta_{\underline\e}$ 
for $\eta$ small enough near $\Sigma$ is equivalent to the fact that the 
restriction of $\fgbar$ to a neighbourhood of $q$ in $\{z_1=z_1(q)\}$ has an isolated critical value.
\end{proof}

Then we have proved $(i)$ of Theorem \ref{theo_main}.

In the example above, we have $\Sigma_1 = \{z_2 = z_3 =0 \}$ and $H_1= \{z_1=p \}$, for some $p \in \bC$ small. Then $\fgbar_{|H_1} (z_2,z_3) = (z_2 + z_3^2) \bar{z_2}$, which has no 
isolated critical value.

\section{The boundary of the Milnor fibre of non-isolated singularities}
\label{section_4}

We continue with the setting of previous section. For each irreducible component $\Sigma_i$ of the intersection $V(f) \cap V(g)$, let $H_i$ be a generic transversal section. Suppose that each $H_i$ has an isolated critical value. 
Then $\fgbar$ has a Milnor fibration in the tube, as we have proved.

Our goal is to prove that the boundary of the Milnor fibre of $\fgbar: (X,0) \to (\bC,0)$ is a Waldhausen manifold, that is, we will prove that the set
$$L_t := (\fgbar)^{-1}(t) \cap X \cap \partial \Delta_{\underline\e}$$
is a Waldhausen manifold, for any $t \in \BD_\eta^*$, where $\Delta_{\underline\e} \subset \bC^N$ is the polydisk 
for $\underline{\e}=(\e_1,\dots,\e_N)$, and $\partial \Delta_{\underline\e}$ is its boundary.

Consider 
$$L_0 := (\fgbar)^{-1}(0) \cap X \cap \partial \Delta_{\underline\e} = (fg)^{-1}(0) \cap X \cap \partial \Delta_{\underline\e}$$
with its reduced structure. Its singular locus is the intersection of $\partial \Delta_{\underline\e}$ with the complex curve $\Sigma$ given by the singular locus of $(fg)^{-1}(0)$, 
that is, $L(\Sigma):= \Sigma \cap \partial \Delta_{\underline\e}$.
After an appropriate choice of $\underline\e$ we have that $L(\Sigma)$ is a finite disjoint union of circles $\BS^1$ contained in the open face
$$\mathcal{S}:= \{(z_1,\dots,z_N)\in\bC^N: |z_1|=\epsilon_1,\dots,|z_{N-1}|<\epsilon_{N-1}, |z_N|<\epsilon_N\}.$$

If $\Sigma = \Sigma_1 \cup \dots \cup \Sigma_k$ is the decomposition of $\Sigma$ into irreducible components, we define the following tubular neighbourhood of
$L(\Sigma)$ in $\partial \Delta_{\underline\e}$:
$$W = W[1] \sqcup \dots \sqcup W[k],$$
where $W[l]$ is a small tubular neighbourhood of the circle $L(\Sigma_l):= \Sigma_l \cap \partial \Delta_{\underline\e}$ in 
$\mathcal{S} \subset \partial \Delta_{\underline\e}$, for $l = 1, \dots, k$. 

Given a point $p\in\Sigma_l\backslash\{0\}$, let $H_p$ be the $2$-dimensional affine hyperplane of $X$ passing through $p$ and parallel to the hyperplane $\{z_1=0\}$. 
Choosing $\underline\epsilon$ small enough we may assume that the Milnor number of the function-germ $(fg|_{H_p},p)$ is independent of $p$. 

Therefore, by Remark \ref{lemma_2} and the compactness of $L(\Sigma_l)$, we deduce the existence of a positive real number $\delta$ such that for any 
$p \in L(\Sigma_l)$ the ball in $H_p$ centered at $p$ and of radius $\delta$ is a Milnor ball for the function-germ $(fg|_{H_p},p)$. We may choose $W[l]$ to be the union of
those balls when $p$ varies in $L(\Sigma_l)$. With this definition, there is a natural fibration
\begin{equation}
\label{fibracionl}
\sigma_l:W[l] \to L(\Sigma_l)
\end{equation}
with fibre a complex $2$-ball.

Since $f\bar g$ satisfies the Thom's $a_f$ condition, there exists a positive $\eta$ such that the mapping
$$\Psi_l:W[l]\cap (f\bar g)^{-1}(D_\eta) \to D_\eta \times L(\Sigma_l)$$
defined by $\Psi_l:=(f\bar g,\sigma_l)$ has only the circle $\{0\}\times L(\Sigma_l)$ as critical values. Therefore, for $t \in D_\eta^*$, the restriction
$$\sigma_l|_{W_t[l]}: W_t[l] \to L(\Sigma_l)$$
is a locally trivial fibration, where $W_t[l]:= L_t \cap W[l]$. Its fibre is the so-called {\it transversal Milnor fibre of $f\bar g$ at $\Sigma_l$} and its monodromy $h$ is the 
{\it vertical monodromy along $\Sigma_l$} (see~\cite{Si}).

\subsection{The exterior part}
By the assertion $(\dagger)$ we can take $\delta$ (and hence $W$) sufficiently small such that $L_t \backslash \mathring{W}$ is a unramified covering of $L_0 \backslash \mathring{W}$,
where $\mathring{W}$ is the interior of $W$ (see \cite{Si} for instance). In this case, $W$ is said to be a {\it vanishing zone for $\fgbar$}.
Set $W_0:= W \cap L_0$ and $W_t:= W \cap L_t$.

Let 
$$n: \tilde{Y} \to (fg)^{-1}(0)$$
be the normalization of $(fg)^{-1}(0)$. Set $\tilde{\Sigma}:= n^{-1}(\Sigma)$ and $\tilde{L}_0 := n^{-1}(L_0)$.

It is easy to see that $\tilde{Y}\backslash \tilde{W}_0$ is a Waldhausen manifold, where $\tilde{W}_0:= n^{-1}(W_0)$ is a tubular neighbourhood
of $\tilde{\Sigma}$ in $\tilde{X}$ (see \cite{MP} for instance). Since the normalization is a biholomorphism outside $\tilde{W}_0$, we find that $L_0 \backslash W_0$
is a Waldhausen manifold. Then $L_t\setminus W_t$ is a Waldhausen manifold as well due to the following results: 

\begin{prop} \label{prop_wald}
A regular finite covering of a Seifert manifold is a Seifert manifold.
\end{prop}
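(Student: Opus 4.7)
The plan is to pull back the Seifert fibration of $M$ along the covering map. Write the Seifert fibration of $M$ as $\pi: M \to B$, with $B$ a $2$-orbifold, and let $p: \tilde{M} \to M$ be the given regular finite covering, whose deck group $G$ acts freely on $\tilde{M}$. For every Seifert fibre $F \subset M$, the preimage $p^{-1}(F)$ is a finite covering of $F \cong \BS^1$, hence a disjoint union of circles. I would take the collection of all such circles as the candidate Seifert fibres of $\tilde{M}$. Because $p$ intertwines the action of $G$ with the trivial action on $M$, this partition into circles is automatically $G$-invariant.

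The core of the argument is then the verification of the local model. Let $\tilde F$ be a candidate fibre and $F=p(\tilde F)$. Choose a neighbourhood $V$ of $F$ in $M$ isomorphic to the standard $(b,a)$-fibered solid torus
$$V\cong D^2\times[0,1]/(z,1)\sim(e^{2\pi i a/b}z,0),$$
with $\gcd(a,b)=1$. Let $\tilde V$ be the connected component of $p^{-1}(V)$ containing $\tilde F$; then $\tilde V\to V$ is a connected finite covering of some degree $d$. Since $V$ is homotopy equivalent to $\BS^1$ with $\pi_1(V)=\bZ$, its connected finite coverings are classified by the finite index subgroups $d\bZ\subset\bZ$, and each such cover is again a solid torus. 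A direct computation then identifies $\tilde V$, together with the circle foliation inherited from the Seifert structure of $V$, as a standard $(b',a')$-fibered solid torus, with $b'=b/\gcd(b,d)$ and $a'$ the appropriate reduction of $a$ modulo $b'$.

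Gluing these local fibered solid torus models along their boundary tori (where the identifications are dictated by $p$) produces a global Seifert fibration of $\tilde M$, proving that $\tilde M$ is a Seifert manifold. The main obstacle is the local computation just sketched: one must verify that the lift of the $\BS^1$-action on $V$ to $\tilde V$ is compatible with the standard fibered solid torus structure with the claimed invariants $(b',a')$, rather than giving merely some abstract foliation of a solid torus by circles. This reduces to elementary arithmetic in the meridian-longitude basis of $\partial V$, describing how the rational twist $a/b$ behaves under a $d$-fold covering, but it is where the essential content of the proposition lies. The freeness of the $G$-action and the regularity of $p$ enter only to guarantee that the pulled-back partition is globally well defined and $G$-equivariant, while the Seifert structure itself is dictated fibrewise.
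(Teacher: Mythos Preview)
Your argument is correct and proceeds along a somewhat different route than the paper's. You work locally: you pull back the Seifert fibres, take connected components, and then verify that a connected $d$-fold cover of a standard $(b,a)$-fibered solid torus is again a standard fibered solid torus, with explicit new invariants. The paper instead works globally: it composes the covering $\pi:M'\to M$ with the Seifert projection $p:M\to F$ to obtain a map $p\circ\pi:M'\to F$ whose fibres are finite disjoint unions of circles; it then passes to the quotient of $M'$ by the relation ``same connected component of the same fibre of $p\circ\pi$'' and observes that this quotient is a surface $F'$ covering $F$, over which $M'$ is an $\BS^1$-bundle with finitely many special fibres. Both constructions produce the same partition of $M'$ into circles; yours makes the local model (and the new Seifert invariants) explicit, while the paper's is shorter and manufactures the base of the new fibration in a single stroke.

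One remark: neither your argument nor the paper's actually uses the regularity of the covering. The decomposition of $M'$ into connected components of preimages of Seifert fibres is well defined for \emph{any} finite covering, and your local verification of the fibered-solid-torus model never invokes the deck group. So your comments about $G$-invariance, while true, are not doing any work; the proposition holds for arbitrary finite covers, and you may safely drop the appeals to regularity.
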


\begin{proof} 
Let $\pi: M' \to M$ be a finite covering of a Seifert manifold $M$, which is a fibre bundle over a compact surface with boundary $F$ and fibre $\BS^1$ (with finitely many special fibres), and projection $p: M \to F$. The composition
$$p \circ \pi: M' \to F$$
is a fibre bundle (with finitely many special fibres) with fibre a finite covering of $\BS^1$. If such fibre is connected, then we have expressed $M'$ as a bundle over a surface with fibre $\BS^1$, and therefore it is a Seifert manifold. If the fibre is not connected, we define the following equivalence relation in $M'$: two points are identified if they belong to the same connected component of the same fibre of $p \circ \pi$. The quotient of $M'$ by this equivalence relation is a surface $F'$ that covers $F$, and the quotient application $q: M' \to F'$ expresses $M'$ as a bundle over $F'$ with fibre $\BS^1$.
\end{proof}

\begin{cor}\label{cor_wald}
A regular finite covering of a Waldhausen manifold is a \-Waldhausen manifold.  
\end{cor}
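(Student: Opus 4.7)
The plan is to lift the graph-manifold decomposition of the Waldhausen base across the covering map, applying Proposition \ref{prop_wald} piecewise.

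First I would unwind the definition: a Waldhausen manifold $M$ comes equipped with a finite decomposition $M=M_1\cup\cdots\cup M_n$, where each $M_i$ is a Seifert fibered piece and the pieces are glued along a finite disjoint union of $2$-tori $T_1,\ldots,T_k$ in their boundaries. Fix such a decomposition once and for all. Given the regular finite covering $\pi:M'\to M$, set $M_i':=\pi^{-1}(M_i)$ and $T_j':=\pi^{-1}(T_j)$. The whole argument reduces to showing that the collection $\{M_i',T_j'\}$ is again a Waldhausen decomposition of $M'$.

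Next I would analyze the two types of pieces separately. For the gluing surfaces, each $T_j'$ is a finite covering of a $2$-torus; since $\pi$ preserves orientation (the covered manifold being $3$-dimensional and two-sided along each $T_j$), every connected component of $T_j'$ is again a $2$-torus. For the Seifert pieces, each connected component $N$ of $M_i'$ maps by $\pi|_N$ to $M_i$ as a finite covering (the restriction of a covering to the preimage of a connected open set restricted further to a component is again a covering). Proposition \ref{prop_wald} then forces $N$ to be a Seifert manifold. Reassembling: $M'$ is built from the Seifert components of the $M_i'$ by identification along the torus components of the $T_j'$, which is exactly the definition of a Waldhausen manifold.

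The only delicate point, and hence the main obstacle, is checking the hypothesis of Proposition \ref{prop_wald}: the restriction $\pi|_N:N\to M_i$ need not be a regular covering even though $\pi$ is, because the stabilizer of $N$ inside the deck group need not act transitively on the full fiber of $\pi$ over a point of $M_i$. I would overcome this in one of two ways. The cleanest is to observe that the proof of Proposition \ref{prop_wald} given above actually nowhere uses regularity; it only uses finiteness of the covering to produce the quotient surface $F'$ covering $F$. Alternatively, one can pass to a regular cover $\widehat{M}\to M$ dominating $M'$ (for instance by taking the Galois closure of $\pi$), apply Proposition \ref{prop_wald} there, and then descend to $M'$, since an intermediate quotient of a Seifert manifold by a finite group action compatible with the Seifert fibration is again Seifert. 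Either way the corollary follows.
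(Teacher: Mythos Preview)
Your approach is correct and essentially identical to the paper's: pull back the torus decomposition, observe that finite covers of tori are tori, and apply Proposition~\ref{prop_wald} to each Seifert piece. The paper's proof is a single sentence doing exactly this.

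One remark on your ``main obstacle'': the concern is actually unfounded. If $\pi:M'\to M$ is regular with deck group $G$ and $N$ is a connected component of $\pi^{-1}(M_i)$ with stabilizer $H\leq G$, then $H$ acts freely on $N$ and the fiber of $\pi|_N$ over any point of $M_i$ is precisely a single $H$-orbit (the intersection of a $G$-orbit with $N$), so $\pi|_N:N\to M_i$ \emph{is} regular with deck group $H$. You conflated transitivity on the fiber of $\pi|_N$ with transitivity on the full fiber of $\pi$; only the former is required. That said, your first workaround---that the proof of Proposition~\ref{prop_wald} never invokes regularity---is entirely correct and is the simplest way to dispose of the issue in any case.
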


\begin{proof}
 The finite cover is decomposed in regular finite covers of Seifert manifolds cutting along tori, according to the decomposition of the base.
\end{proof}

\subsection{The vanishing zone}

Now, since $\partial W_t$ is a finite disjoint union of tori, all we have to prove is that $W_t$ is a Waldhausen manifold. To prove that, we have to show that each
connected component $W_t[l]$, for $l= 1, \dots, k$, is a Waldhausen manifold. By the definition of Waldhausen manifold, it is sufficient to give a decomposition of
each transversal Milnor fibre which is invariant under the corresponding vertical monodromy $h_l$ and such that the corresponding pieces of $W_t[l]$ are Waldhausen manifolds.
We will prove that these pieces are Waldhausen manifolds either by proving that they are fibre bundles over a circle with fibre a cylinder, or by showing directly that they are 
finite unramified coverings of Walhausen manifolds.

Let us fix an irreducible component $\Sigma_l$ of $\Sigma$. 
Recall the fibration~(\ref{fibracionl}) has fibre $W[l]_s$ over $L(\Sigma_l)$ a complex $2$-ball centered in $s$ and contained in the smooth complex surface $X\cap\{z_1=z_1(s)\}$. 
We can look at the restriction $(\fgbar)|_{W[l]}:W[l] \to \bC$ as a family in the parameter $\BS^1$. For each $s \in L(\Sigma_l)$, we denote by $f_s \bar{g}_s$ the restriction of 
$\fgbar$ to $W[l]_s$. We have have taken $W[l]$ so that each $W[l]_s$ is a Milnor ball for $f_s \bar{g}_s$.

It is well known that then we can consider a minimal embedded resolution in family 
$$\pi: \tilde{M} \to W[l],$$ 
which is an analytic morphism $\pi$ where:

\begin{itemize}
\item[$\bullet$] $A:=((\fgbar)|_{W[l]}\circ\pi)^{-1}(0)$ is the total transform of the zero set of $fg$ in $W[l]$, with a decomposition into irreducible components 
$A = \cup_{i=1}^r A_i$, where an irreducible component is defined as the closure of a connected component of $A \backslash Sing(A)$;
\item[$\bullet$] for each $s \in L(\Sigma_s)$, define $X_s := \pi^{-1}(W[l]_s)$. Then 
$$\pi_s: X_s \to B_\delta$$
is the minimal embedded resolution of the plane curve singularity defined by the restriction of $fg$ to $H_s$. Denote by $A^s$ the divisor $A\cap X_s$ and set $A^s_i:=A_i \cap X_s$. Notice that $A^s_i$ may consist of several irreducible components of $A^s$. 
\end{itemize}

Note that $\sigma_l \circ \pi: \tilde{M} \to L(\Sigma_l)$ is a fibre bundle with fibre $X_s$.

Recall the regions $V_i$, $V_{ij}$, $\tilde{V}_p$ and $\tilde{V}_{ip}$ introduced in Section~\ref{section_2}. In order to simplify notation, from now on, we shall make no distinction 
between boxes of the type 
$V_i$ and $\tilde{V}_p$ nor between boxes of the type $V_{ij}$ and $\tilde{V}_{ip}$.

For each $s \in L(\Sigma_l)$, define $V_i^s$ to be the union of the boxes $V_j$ introduced in Section~\ref{section_2} corresponding to irreducible components of the divisor $A^s$ which are contained in $A^s_i$. 
Analogously, define $V_{ij}^s$ in $X_s$ as the union of the boxes $V_{kl}$ introduced in Section~\ref{section_2} corresponding to intersections of two irreducible components of $A^s$, one contained in 
$A^s_k$, and the other in $A^s_l$ (notice that since $A^s_i$ may consist of several irreducible components, the region $V_{ii}^s$ may be non-empty). 

From now on, define $T_i:=\cup_{s\in L(\Sigma_l)}V_i^s$ and  $T_{ij}:=\cup_{s\in L(\Sigma_l)}V_{ij}^s$. Note that each $T_i$ (resp. $T_{ij}$) is a fibre bundle over $L(\Sigma_l)$ with fibre $V_i^s$ 
(resp. $V_{ij}^s$).

Clearly, the transversal Milnor fibre $(f\bar g)^{-1}(t)\cap \sigma_l^{-1}(s)$ is diffeomorphic to 
$$F_t(s):= (f_s \bar{g}_s \circ \pi_s)^{-1}(t),$$
and the vertical monodromy of $\fgbar$ can be taken to be the automorphism $\tilde{h}:= \tilde{H}|_{F_t(s)}: F_t(s) \to F_t(s)$. Moreover, $F_t(s)$ can be decomposed in 
$\tilde{h}$-invariant subsets as follows:
$$F_t(s) = \biggl( \bigcup_i \big( F_t(s) \cap V_i^s \big) \biggr) \cup \biggl( \bigcup_{i,j} \big( F_t(s) \cap V_{ij}^s \big) \biggr).$$

If we set $\tilde{W}_t[l]:= \pi^{-1}(W_t[l])$, which is diffeomorphic to $W_t[l]$, we have the decomposition
$$\tilde{W}_t[l] = \biggl( \bigcup_i \big( \tilde{W}_t[l] \cap T_i \big) \biggr) \cup \biggl( \bigcup_{i,j} \big( \tilde{W}_t[l] \cap T_{ij} \big) \biggr),$$
and clearly each $\tilde{W}_t[l] \cap T_i$ (resp. $\tilde{W}_t[l] \cap T_{ij}$) is a fibre bundle over $L(\Sigma_l)$ with fibre 
$F_t(s) \cap V_i^s$ (resp. $F_t(s) \cap V_{ij}^s$). 

In Lemma \ref{lemma}, we showed that each part of the Milnor fibre of type $F_t(s) \cap V_{ij}^s$ is a finite disjoint union of cylinders. Hence $\tilde{W}_t[l] \cap V_{ij}$ is a fibre bundle 
over $L(\Sigma_l)$
with fibre a finite disjoint union of cylinders. The classification of such kind of fibrations yields that $\tilde{W}_t[l] \cap T_{ij}$ is a Seifert manifold.

Now, if $V_i^s \subset X_s$ is such that $a_i=b_i$, by $(iv)$ of Lemma \ref{lemma} we have that $F_t(s) \cap V_i^s$ is a disjoint union of 
cylinders. As before, we conclude that the corresponding piece $\tilde{W}_t[l] \cap T_i$ is a Seifert manifold.

If $V_i^s \subset X_s$ is such that $a_i \neq b_i$, by $(ii)$ of Lemma \ref{lemma} we have that $F_t(s) \cap V_i^s$ is a finite covering over $A_i^s \cap V_i^s$, which is a 
disjoint union of multi-punctured disks. The corresponding piece $\tilde{W}_t[l] \cap T_i$ is a finite covering of $A_i \cap T_i$.

If $A_i^s$ does not represent a set of rupture vertices of the dual graph of $A^s$ then $F_t(s)\cap T_i$ is a finite disjoint union of either discs or cylinders, and therefore 
$\tilde{W}_t[l]\cap T_i$ is a Seifert manifold.

If $A_i^s$ represents a set of rupture vertices of the dual graph of $A^s$
we claim that $A_i \cap T_i$ is a Waldhausen manifold. If the claim is true then 
$\tilde{W}_t[l] \cap T_i$ is a Waldhausen manifold by Corollary~\ref{cor_wald}. Let us prove the claim.

For any point of $s\in W[l]\setminus\{O\}$ we consider the smooth complex surface germ $(\Omega_s,s)$, where $\Omega_s:=X\cap \{z_1=z_1(s)\}$. The family of functions 
$fg|_{\Omega_s}$ is $\mu$-constant, and hence has a simultaneous resolution over the punctured disc $W[l]\setminus\{0\}$. There is a unique irreducible component $\mathcal{A}$ of the 
exceptional divisor of the simultaneous resolution such that its restriction over the circle $\BS^1$ coincides with $A_i$.
Denote by $\sigma:\mathcal{A}\to W[l]\setminus\{O\}$ the corresponding fibration. The fibres are isomorphic to finitely many $\mathbb{P}^1$'s. Replacing the base by a finite cover
we may assume that the fibre is a single $\mathbb{P}^1$. Since $A_i^s$ represents a set of rupture vertices of the dual graph of $A^s$, the set of intersections of $A_i^s$ with 
the other irreducible components of the total transform of the curve $fg=0$ has cardinality at least $3$, and when we vary $s$ it defines a set of possibly multivalued holomorphic sections of the fibration $\sigma$. 
We have to prove that the complement in $A_i$ of a tubular neighbourhood of these multivalued sections is a Waldhausen manifold. 

After a finite base change we may assume the multi-sections to be uni-valued. The total space of the fibration after the base change is a finite cyclic cover of the original total 
space of the fibration. Thus we may see the original total space of the fibration as a finite free cyclic quotient of the total space obtained after the base change. 
Since the finite free finite quotient of a Waldhausen manifold is Waldhausen (just saturate by the action the set of separating tori), in order to prove that the complement 
in $A_i$ of a tubular neighbourhood of the multivalued sections is a Waldhausen manifold,
we may assume that the sections are actually univalued.

Let $\psi_j:W[l]\setminus\{O\}\to\mathcal{A}$, with $j=1,..,k$ be the set of sections ($k\geq 3$). For any $s$ in the punctured disc $W[l]\setminus\{O\}$ the triple 
$(\psi_1(s),\psi_2(s),\psi_3(s))$ defines a projective reference of the fibre $\mathcal{A}_s$. Denote by $\mathcal{D}$ the completion of the punctured disc $W[l]\setminus\{O\}$ to a disc.
There is a unique trivial $\mathbb{P}^1$-bundle $\bar\sigma:\bar{\mathcal{A}}\to\mathcal{D}$ together with three sections $\bar{\psi}_i$ forming a projective coordinate system of each fibre and extending
the previous one. Let $\bar\psi_j$ be the unique extension of the section $\psi_j$ to $\mathcal{D}$. 

Consider the collection of points $\{p_1,...,p_d\}:=\{\bar\psi_j(0):1\leq j\leq k\}$ (several values may coincide). We have a fixed isomorphism 
$\bar{\mathcal{A}} \cong \mathbb{P}^1\times\mathcal{D}$. We fix mutually disjoint disks $K_j$ around $p_j$ in $\mathbb{P}^1$. Shrinking the size of $\mathcal{D}$ we may assume that the 
sections hit the boundary of the polydisks $K_i\times\mathcal{D}$ in the interior of the face $K_i\times\partial\mathcal{D}$. Notice the obvious equality $A_i=\bar\sigma^{-1}(\partial\mathcal{D})$
and consider the splitting
$$A_i=(\cup_{j=1}^dK_i\times\partial\mathcal{D}) \bigcup \big( (\mathbb{P}^1\setminus(\cup_{j=1}^dK_i))\times\partial\mathcal{D} \big).$$
The last piece is Seifert and the complement in each of the pieces $K_i\times\partial\mathcal{D}$ of the tubular neighbourhoods of the sections meeting it is Waldhausen, for being the complement of an algebraic link.

Then we have proved $(ii)$ of Theorem \ref{theo_main}.

\section{The Zeta Function of the Monodromy}
\label{section_5}

We want to give a formula to calculate the zeta function of the monodromy $h$ of $\fgbar: (\bC^2,0) \to (\bC,0)$ with an isolated 
critical point, in terms of the combinatorics of the embedded resolution of $fg$, in the same spirit of the work of A'Campo \cite{AC}, where he calculated the monodromy of holomorphic functions.

If we set $F_\theta:= (\fgbar)^{-1}(\eta e^{i \theta})$, for $\eta$ sufficiently small then for each $q \geq 0$, the monodromy $h: F_\theta \to F_\theta$ defines an isomorphism between vector spaces (the cohomology groups) given by $h^*: H^q(F_\theta; \bC) \to H^q(F_\theta; \bC)$, the so called {\it algebraic monodromy}. The {\it zeta function} of such monodromy is defined by
$$Z(t) = \prod_{q\geq0} \big( det (I^* - t h^*) \big) ^{(-1)^{q+1}},$$
where $I^*$ is the identity $(q \times q)$-matrix.

There is a classical way of calculating the zeta function of $h$ in terms of the Lefschetz numbers of $h$ (\cite{We}, \cite{MO}), as follows:

For each $k \geq 1$, the Lefschetz number of the $k$-iteration of $h$ is defined by
$$\Lambda(h^k) = \sum_{q\geq0} (-1)^q trace [(h^*)^k: H^q(F_\theta,\bC) \to H^q(F_\theta,\bC)].$$ 
If we define the integers $s_1, s_2, \dots$ by the relations
$$\Lambda(h^k) = \sum_{i \mid k} s_i,$$
then the zeta function of $h$ is given by
$$Z(t) = \prod_{i\geq1} (1-t^i)^{-s_i/i}.$$

So all we have to do is to calculate the Lefschetz numbers of $h$. First we recall the following lemma:

\begin{lemma}
Consider the following commutative chain map on an exact sequence:
\begin{equation*}
\begin{CD} 
0 @>>> G_0 @>{\a_0}>> G_1 @>{\a_1}>> \dots @>{\a_{n-1}}>> G_n @>>> 0  \\ 
& & @VV{\varphi_0}V @VV{\varphi_1}V & & @VV{\varphi_n}V \\ 
0 @>>> G_0 @>{\a_0}>> G_1 @>{\a_1}>> \dots @>{\a_{n-1}}>> G_n @>>> 0   
\end{CD}
\end{equation*}
Then
$$\sum_{i=0}^n (-1)^i {\rm trace}[\varphi_i] =0.$$
\end{lemma}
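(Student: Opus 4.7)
The plan is to reduce the long exact sequence to a chain of short exact sequences indexed by the kernels (equivalently, the images), and then invoke the additivity of trace on short exact sequences followed by a telescoping cancellation.

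More precisely, I would first set $K_i := \ker(\alpha_i) = \operatorname{im}(\alpha_{i-1})$ for $0 \leq i \leq n$, with the convention $K_0 = 0$ and $K_{n+1} = 0$ (the former because $\alpha_0$ is injective on $G_0$ after the leading zero, and the latter because $\alpha_{n-1}$ is surjective onto $G_n$). For each $i$ the commutativity of the diagram means $\varphi_i$ sends $K_i$ into itself and induces an endomorphism $\psi_i$ of $K_i$ compatible with the short exact sequence
\begin{equation*}
0 \longrightarrow K_i \longrightarrow G_i \longrightarrow K_{i+1} \longrightarrow 0.
\end{equation*}

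The next step is to invoke the additivity of the trace on short exact sequences of finite-dimensional vector spaces equipped with compatible endomorphisms: choosing a basis of $K_i$ and extending to a basis of $G_i$ whose images form a basis of $K_{i+1}$, the matrix of $\varphi_i$ becomes block-upper-triangular with diagonal blocks $\psi_i$ and $\psi_{i+1}$, hence
\begin{equation*}
\operatorname{trace}(\varphi_i) = \operatorname{trace}(\psi_i) + \operatorname{trace}(\psi_{i+1}).
\end{equation*}

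Finally, substituting into the alternating sum yields
\begin{equation*}
\sum_{i=0}^{n} (-1)^i \operatorname{trace}(\varphi_i) = \sum_{i=0}^{n} (-1)^i \operatorname{trace}(\psi_i) + \sum_{i=0}^{n} (-1)^i \operatorname{trace}(\psi_{i+1}),
\end{equation*}
and the second sum, after reindexing $j = i+1$, equals $-\sum_{j=1}^{n+1} (-1)^j \operatorname{trace}(\psi_j)$; all interior terms cancel and the boundary contributions $\operatorname{trace}(\psi_0)$ and $\operatorname{trace}(\psi_{n+1})$ vanish because $K_0$ and $K_{n+1}$ are zero. There is no real obstacle here; the only thing to be careful about is ensuring the diagram chase really does produce well-defined induced endomorphisms on the kernels, which is automatic from commutativity with the $\alpha_i$, and ensuring we are in a setting (finite-dimensional vector spaces over a field) where trace is additive on short exact sequences.
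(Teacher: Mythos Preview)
Your argument is correct and is the standard proof of this classical fact: split the long exact sequence into short exact sequences via the kernels $K_i=\ker(\alpha_i)=\operatorname{im}(\alpha_{i-1})$, use the block-upper-triangular form to get $\operatorname{trace}(\varphi_i)=\operatorname{trace}(\psi_i)+\operatorname{trace}(\psi_{i+1})$, and telescope. The bookkeeping with $K_0=0$ and $K_{n+1}=0$ is fine, and your caveat that one needs finite-dimensionality (so that trace is defined and additive on short exact sequences) is exactly the right hypothesis to flag.

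As for comparison with the paper: the paper does not prove this lemma at all. It is introduced with the phrase ``First we recall the following lemma'' and then immediately used, so it is treated as a well-known piece of homological algebra. Your write-up therefore supplies more than the paper does; there is nothing to compare approaches against.
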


\s
Let $\pi:\tilde{M}\to\bC^2$ be an embedded resolution of the germ of curve $fg$ at the origin. Let $E=\cup_{i=1}^s E_i$ be a decomposition of the exceptional divisor of $\pi$ into irreducible components. Let $a_i$ and $b_i$ denote the multiplicity of $E_i$ in the total transforms of the curves $f$ and $g$, respectively. 

Now fix $\theta$ and apply the previous lemma to the Mayer-Vietoris sequence associated to the decomposition of the Milnor fibre $F_\theta$ of $\fgbar$ induced by the boxes $V_i$ and $V_{ij}$ as in Lemma \ref{lemma} (here, in order to simplify notation, we do not make distinction between $V_i$ and $\tilde{V_p}$ nor between $V_{ij}$ and $\tilde{V_{ip}}$). Then we get:

$$\Lambda(h^k) = \sum_{\substack{i=1 \\ a_i \neq b_i}}^s \Lambda(h_{V_i\cap F_\theta}^k) + 
\sum_{\substack{i=1 \\ a_i = b_i}}^s \Lambda(h_{V_i \cap F_\theta}^k) + 
\sum_{\substack{i,j=1 \\ i \neq j}}^s \Lambda(h_{V_{ij}\cap F_\theta}^k) - 
\sum_{i=1}^s \Lambda(h_{\partial V_i\cap F_\theta}^k),$$
where $h_{V_i \cap F_\theta}^k$ (resp. $h_{V_{ij} \cap F_\theta}^k$ and $h_{\partial V_i \cap F_\theta}^k$) denotes the restriction of $h^k$ to the intersection $V_i \cap F_\theta$ (resp. $V_{ij} \cap F_\theta$ and $\partial V_i \cap F_\theta$).

\begin{lemma} 
\label{lema2}
For each $k \geq 1$ we have:
\begin{itemize}
\item[$(a)$] $\Lambda(h_{V_{ij}\cap F_\theta}^k) =0$, for any $i \neq j$; 
\item[$(b)$] $\Lambda(h_{V_i \cap F_\theta}^k) =0$, if $a_i=b_i$; 
\item[$(c)$] $\Lambda(h_{\partial V_i\cap F_\theta}^k) =0$, for any $i$.
\end{itemize}
\end{lemma}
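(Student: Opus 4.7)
The common thread for (a), (b), (c) is that each piece $P$ is a compact real $2$-manifold (possibly with boundary) of Euler characteristic zero: disjoint unions of cylinders in (a) and (b) by Lemma~\ref{lemma}, and disjoint unions of circles in (c), since $\partial V_i\cap F_\theta$ is the boundary of the surface $V_i\cap F_\theta$. It therefore suffices to exhibit a representative of the geometric monodromy $h$ whose $k$-th iterate restricted to $P$ is, for every $k$, either fixed-point free or isotopic to the identity: in the first case $\Lambda(h^k|_P)=0$ by the Lefschetz fixed point theorem, and in the second $\Lambda(h^k|_P)=\chi(P)=0$.

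\textbf{Case (a).} By Lemma~\ref{lemma0} one cannot have $a_i=b_i$ and $a_j=b_j$ simultaneously, so at least one of $a_i-b_i$, $a_j-b_j$ is nonzero. Lemma~\ref{lemma}(i),(iii) allows us to assume, after the $1$-parameter deformation used in their proofs, that $\fgbar\circ\pi$ is a pure monomial $x^{a_i}\bar x^{b_i}y^{a_j}\bar y^{b_j}$ times a nonzero constant on $V_{ij}$. I would then take as geometric monodromy the time-$s$ map of the circle flow $(x,y)\mapsto(e^{is\alpha_1}x,e^{is\alpha_2}y)$ with real $\alpha_1,\alpha_2$ chosen so that $(a_i-b_i)\alpha_1+(a_j-b_j)\alpha_2$ equals the normalisation required to cover one loop in the base. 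Since $V_{ij}\cap F_\theta\subset\{x\neq 0,\,y\neq 0\}$, the iterate $h^k$ has a fixed point only when $e^{ik\alpha_1}=e^{ik\alpha_2}=1$, and in that case $h^k$ is literally the identity on the whole piece.

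\textbf{Cases (b) and (c).} In case (b), Lemma~\ref{lemma}(iv) shows that $V_i\cap F_\theta$ is a disjoint union of cylinders cyclically permuted by $h$. An iterate $h^k$ permuting the components non-trivially has empty fixed set; one that fixes each component is an orientation-preserving self-homeomorphism of a cylinder preserving its two boundary circles separately — these correspond to the two neighbours of $E_i$ in the dual graph, which has valency $2$ at the vertex $E_i$ by Lemma~\ref{lemma0} — and hence is isotopic to the identity. In case (c), $\partial V_i\cap F_\theta$ is a disjoint union of circles that $h^k$ either permutes non-trivially (no fixed points) or fixes setwise; on each fixed $S^1$ it restricts to an orientation-preserving self-map and hence is isotopic to the identity, with $\Lambda=\chi(S^1)=0$.

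\textbf{Main obstacle.} The only delicate ingredient is the construction of a geometric monodromy with the precise local form used in case~(a), and verifying the orientation-/boundary-preserving claims in cases (b) and (c). One sidesteps the need for full global compatibility by working only up to isotopy: the Lefschetz number depends only on the homotopy class of $h|_P$, and the fibration-theoretic descriptions of the pieces coming from Lemma~\ref{lemma} pin down that class.
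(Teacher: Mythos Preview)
Your argument is correct, but the paper proceeds differently and more uniformly. Rather than running a separate fixed-point-free/identity dichotomy on each piece, the paper first observes that the cylinders of cases~(a) and~(b) deformation retract onto boundary circles contained in some $\partial V_i\cap F_\theta$, so it suffices to treat case~(c). For~(c) it then notes that $\partial V_i\cap F_\theta$ is a finite covering of a circle in $E_i$ and that the monodromy commutes with the covering projection; a short general lemma (Lemma~\ref{lemma3}) then gives $\Lambda(h^k)=\chi(S^1)\cdot\Lambda(h_b^k)=0$. This avoids constructing an explicit rotational model for the monodromy on $V_{ij}$ and avoids invoking the mapping class group of the annulus.

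What each approach buys: your route makes the monodromy on the corner pieces completely explicit via the monomial model, which is conceptually satisfying and close in spirit to A'Campo's original computation. The paper's route is shorter, treats all three cases at once, and isolates the mechanism---compatibility with a covering over a base of zero Euler characteristic---into a reusable lemma that is in any case needed immediately afterwards to handle the pieces $V_i\cap F_\theta$ with $a_i\neq b_i$.
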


\begin{proof}
We have proved in Lemma \ref{lemma} that each piece of the Milnor fibre considered in this lemma is a disjoint finite union of either cylinders (cases $(a)$ and $(b)$) or circles (case $(c)$). The cylinders of cases $(a)$ and $(b)$ have always a circle of case $(c)$ as a boundary component. Since a cylinder can be retracted to its boundary, it is enough to prove the result for case $(c)$.
The finite union of circles $\partial V_i \cap F_\theta$ is a finite covering over a circle in $E_i$ and the monodromy is compatible with the covering projection. Since the Euler characteristic of the circle vanishes, the result follows from the lemma bellow.
\end{proof}

\begin{lemma}
\label{lemma3}
Let $\pi: X \to B$ be a $m$-covering of a compact manifold with boundary and let $h$ be an automorphism of $X$ such that the diagram
$$
\xymatrix { 
X \ar[r]^h \ar[d]_{\pi} & X \ar[dl]^{\pi} \\ 
B &  \\
}
$$
commutes. For $b \in B$, denote by $h_b: \pi^{-1}(b) \to \pi^{-1}(b)$ the permutation induced by $h$. Then $\Lambda(h^k) = \chi(B) . \Lambda(h_b^k)$. Moreover, if the permutation $h_b$ is cyclic and transitive then 
$$\Lambda(h^k) = 
\begin{cases}
\chi(B) . m \ , \ {\text if} \ m \mid k; \\
\ \ \ \ 0 \ \ \ \ \ , \ {\text if} \ m \nmid k. \\
\end{cases} 
$$
\end{lemma}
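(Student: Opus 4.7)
The plan is to compute $\Lambda(h^k)$ cellularly after lifting a CW decomposition of $B$ through the covering $\pi$. First I would reduce to the case when $B$ is connected (otherwise sum over components), then triangulate $B$ finely enough that every closed simplex lies in a trivializing open of $\pi$; compactness of $B$ guarantees such a triangulation exists. Over each such closed simplex $\sigma$ the covering trivializes as $\pi^{-1}(\sigma) = \sigma^{1} \sqcup \cdots \sqcup \sigma^{m}$, and these lifts together assemble into a CW structure on $X$ compatible with $\pi$.

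The crucial point, which I anticipate being the main step to nail down, is that $h$ is cellular for this lifted structure and acts as the identity on every cell it preserves. Because $h$ is fibre-preserving, $h(\pi^{-1}(\sigma)) = \pi^{-1}(\sigma)$; because each connected $\sigma^{i}$ maps homeomorphically to $\sigma$, $h$ permutes $\{\sigma^{1},\ldots,\sigma^{m}\}$ by the fibre permutation $h_b$ read off at any $b\in\sigma$ (well-defined since $b\mapsto h_b$ is locally constant into the discrete symmetric group of the fibre). Whenever $h$ preserves some $\sigma^{i}$, the identity $\pi\circ h = \pi$ together with $\pi|_{\sigma^{i}}$ being a homeomorphism force $h|_{\sigma^{i}} = \mathrm{id}$. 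With that in hand, the cellular trace of $h^{k}_{*}$ on $C_{q}(X)$ picks up $+1$ for each $q$-cell preserved by $h^{k}$ and $0$ otherwise, yielding $|\mathrm{Fix}(h_b^{k})|$ times the number of $q$-cells of $B$.

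Summing with signs, and using that $|\mathrm{Fix}(h_b^{k})|$ is locally constant in $b$ (it depends only on the conjugacy class of $h_b^{k}$, and $h_b$ varies only by monodromy conjugation along paths in $B$), I obtain
$$\Lambda(h^{k}) = |\mathrm{Fix}(h_b^{k})|\cdot\chi(B) = \Lambda(h_b^{k})\cdot\chi(B),$$
where the last equality uses that the Lefschetz number of a permutation of a finite set equals its number of fixed points. This settles the first assertion. For the second, if $h_b$ is a single $m$-cycle, then $h_b^{k}$ decomposes into $\gcd(m,k)$ cycles each of length $m/\gcd(m,k)$, and hence has fixed points exactly when $m\mid k$; in that case $h_b^{k}=\mathrm{id}$ fixes all $m$ points, producing the stated dichotomy.

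As a slicker alternative one could bypass the cellular bookkeeping by observing directly that $\mathrm{Fix}(h^{k})$ is a disjoint union of sheets of $\pi$, hence an unramified subcovering $\pi\colon\mathrm{Fix}(h^{k})\to B$ of degree $|\mathrm{Fix}(h_b^{k})|$, so that $\chi(\mathrm{Fix}(h^{k})) = |\mathrm{Fix}(h_b^{k})|\cdot\chi(B)$; then $\Lambda(h^{k}) = \chi(\mathrm{Fix}(h^{k}))$ — valid here because $h^{k}$ is literally the identity on an open neighbourhood of its fixed locus — finishes the argument.
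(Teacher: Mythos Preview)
Your argument is correct. Both the main cellular computation and the alternative via $\Lambda(h^{k})=\chi(\mathrm{Fix}(h^{k}))$ go through, and you have correctly identified the key point: since $\pi|_{\sigma^{i}}$ is a homeomorphism and $\pi\circ h=\pi$, whenever $h^{k}$ preserves a lifted cell it is the identity on it, so each preserved cell contributes $+1$ to the cellular trace. Your observation that $|\mathrm{Fix}(h_b^{k})|$ depends only on the conjugacy class of $h_b$ (hence is constant on connected $B$) is exactly what is needed to factor it out of the alternating sum.

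The paper takes a different route. Instead of a global cellular model, it covers $B$ by finitely many contractible open sets with contractible pairwise intersections, proves the formula directly over each contractible piece (where the covering trivializes and $H^{*}(X)\cong H^{*}(\pi^{-1}(b))$), and then inducts on the number of pieces using Mayer--Vietoris and the additivity of Lefschetz numbers along the associated long exact sequence. So the paper's approach is homological/inductive where yours is combinatorial via the Hopf trace formula. Your method is arguably more self-contained: it avoids the mild bookkeeping needed to make the good-cover induction close (one really wants all finite intersections contractible, not just pairwise ones, so that the inductive hypothesis applies to $\hat{B}\cap B_{c}$). On the other hand, the paper's approach makes the multiplicativity $\Lambda(h^{k})=\chi(B)\cdot\Lambda(h_b^{k})$ visible directly at the level of cohomology rather than chains. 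Either way, the computation of $\Lambda(h_b^{k})$ for a transitive $m$-cycle is the same in both proofs.
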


\begin{proof} 
Suppose that $B$ is contractible. Then $X \stackrel {\rm homeo}{\simeq} B \times \pi^{-1}(b)$ and hence $H^q(X) \stackrel {\rm isom}{\simeq} H^q(\pi^{-1}(b))$, and therefore $\Lambda(h^k) = \Lambda(h_b^k)$.
If $B$ is not contractible, we can write it as a finite union of contractible sets $B_i$, with $i \in \{1, \dots, c\}$, such that $B_i \cap B_j$ is contractible, for any $i,j \in \{1, \dots, c\}$. Then we proceed by induction on $c$: Supposing that the result is true for $c-1$, we define $\hat{B}= \cup_{i=1}^{c-1}B_i$, $X_{cup}= \pi^{-1}(\hat{B})$ and $X_c = \pi^{-1}(B_c)$. Then $B= \hat{B} \cup B_c$. Applying the Mayer-Vietoris sequence associated to this decomposition one gets 
$$\Lambda(h) = \Lambda(h_{| X_{cup}}) + \Lambda(h_{| X_c}) - \Lambda(h_{| X_{cup} \cap X_c}) = $$
$$\chi(\hat{B}) \Lambda(h_b^k) + \chi(B_c) \Lambda(h_b^k) - \chi(\hat{B} \cap B_c) \Lambda(h_b^k) = $$
$$\chi(B) \Lambda(h_b^k).$$ 

Now observe that  $\Lambda(h_b^k) = \sum_{q\geq0} (-1)^q trace [(h_b^k)^*: H^q(\pi^{-1}(b)) \to H^q(\pi^{-1}(b))]$, which is the trace of the induced isomorphism $(h_b^k)^*: \underbrace{\bZ \times \dots \times \bZ}_{m - times} \to \underbrace{\bZ \times \dots \times \bZ}_{m - times}$, which is equal to $m$ if $m \mid k$, or zero otherwise.

\end{proof}

Now, if $a_i \neq b_i$, then $V_i\cap F_\theta$ is a covering of degree
$d_i := |a_i-b_i|$ over $E_i$ minus $r_i$-disks, where $r_i$ is the number of double points of the total transform of $fg$ on $E_i$. Moreover, the monodromy $h$ is compatible with the covering projection. Hence using the two previous lemmas we get the following formula:
$$\Lambda(h^k) = \sum_{\substack{i=1 \\ a_i \neq b_i}}^s \Lambda(h_{X_i}^k) =  \sum_{\substack{i=1 \\ a_i \neq b_i \\ d_i \mid k}}^s d_i (2-r_i).$$
Since for each $k \geq 1$ we have
$$\sum_{d_i \mid k} d_i (2-r_i) = \ \Lambda(h^k) \ = \sum_{d_i \mid k} s_{d_i},$$
it follows that
$$s_{d_i} = d_i (2-r_i)$$
and then we have the following theorem:

\begin{theo} \label{funczeta}
Let $f,g: (\bC^2,0) \to (\bC,0)$ be two holomorphic function-germs such that the real analytic map-germ $\fgbar: (\bC^2,0) \to (\bC,0)$ has an isolated critical point. Let $\pi:\tilde{M}\to\bC^2$ be an embedded resolution of the germ of curve $fg$ at the origin. Let $E=\cup_{i=1}^s E_i$ be a decomposition of the exceptional divisor of $\pi$ into irreducible components. Let $a_i$ and $b_i$ denote the multiplicity of $E_i$ in the total transforms of $f$ and $g$, respectively. Set $d_i:=|a_i-b_i|$ and let $r_i$ be the number of double points of the total transform of $fg$ in $E_i$. The the zeta function of the monodromy of the Milnor fibration of $\fgbar$ is given by
$$Z(t) = \prod_{i=1}^s (1-t^{d_i})^{r_i-2}.$$
\end{theo}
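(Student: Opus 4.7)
The strategy is to read off each $s_i$ from the defining system $\L(h^k)=\sum_{d\mid k}s_d$ and substitute into the product formula $Z(t)=\prod_{i\geq 1}(1-t^i)^{-s_i/i}$ recalled just before the statement. I would compute $\L(h^k)$ by applying Mayer--Vietoris to the decomposition of $F_\theta$ supplied by the boxes $V_i$, $V_{ij}$, $\tilde V_p$, $\tilde V_{ip}$ of Section~\ref{section_2}.

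Lemma~\ref{lema2} kills at once the contributions from the intersection pieces $V_{ij}\cap F_\theta$, from the overlaps $\partial V_i\cap F_\theta$, and from the pieces $V_i\cap F_\theta$ with $a_i=b_i$; the only surviving terms come from the $V_i\cap F_\theta$ with $a_i\neq b_i$. For these, part $(ii)$ of Lemma~\ref{lemma} identifies $V_i\cap F_\theta$ as an \'etale covering of degree $d_i=|a_i-b_i|$ over the multi-punctured rational curve $V_i\cap E_i$, whose Euler characteristic equals $2-r_i$. The local model $x^{a_i}\bar x^{b_i}=\eta/(\a\bar\b)$ shows that, when $\eta$ traverses a loop around the origin, the argument of $x$ advances by $2\pi/(a_i-b_i)$, so the monodromy acts on every fibre of the covering as a single cycle of length $d_i$. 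Applying Lemma~\ref{lemma3} then gives
$$\L(h^k_{V_i\cap F_\theta})=\begin{cases}d_i(2-r_i)&\text{if }d_i\mid k,\\ 0&\text{otherwise,}\end{cases}$$
whence, grouping the contributing indices by the common value $d$ of $d_i$, one obtains $\L(h^k)=\sum_{d\mid k}s_d$ with $s_d=d\sum_{i:\,d_i=d,\,a_i\neq b_i}(2-r_i)$.

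Substituting into the product formula and distributing the exponents across the individual vertices yields
$$Z(t)=\prod_{i:\,a_i\neq b_i}(1-t^{d_i})^{r_i-2}.$$
To reach the form stated in the theorem I would invoke Lemma~\ref{lemma0}: any vertex with $a_i=b_i$ has valency $2$ in the dual graph, hence $r_i=2$, so the corresponding factor $(1-t^{d_i})^{r_i-2}$ is trivially $1$ (with the harmless convention $0^0=1$) and those vertices may be included in the product at no cost. I do not expect a serious obstacle: Lemmas~\ref{lema2} and~\ref{lemma3} already carry out the delicate work, and the only point that genuinely needs checking is the cyclic transitivity of the fibrewise monodromy on the sheets of $V_i\cap F_\theta\to V_i\cap E_i$, which is immediate from the explicit local equation.
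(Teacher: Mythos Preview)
Your proposal is correct and follows essentially the same route as the paper: Mayer--Vietoris on the box decomposition, Lemma~\ref{lema2} to kill all but the $V_i$ with $a_i\neq b_i$, Lemma~\ref{lemma3} applied to the degree-$d_i$ covering of the punctured $E_i$ to compute $\Lambda(h^k)$, and substitution into the product formula. Your version is in fact slightly more careful than the paper's in two places: you group by the common value of $d_i$ (the paper writes ``$s_{d_i}=d_i(2-r_i)$'' without addressing repeated $d_i$), and you invoke Lemma~\ref{lemma0} to justify extending the product to all $i$ by observing that $r_i=2$ whenever $a_i=b_i$.
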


\begin{example} {\rm
Consider the holomorphic functions $f(x,y) = x^2+y^3$ and $g(x,y) = x^3+y^2$. Then the real analytic map-germ $\fgbar: (\bR^4,0) \to (\bR^2,0)$ has an isolated singularity at $0 \in \bC^2$. The graph of the resolution of the complex curve $fg$ with the corresponding multiplicities is:

$$
\xymatrix{ 
	     &	*{} \ar@{-}[ddd] &	& & & *{} \ar@{-}[ddd] &      \\	
*{}\ar@{-}[r]^{E_2 \ (3,2)} & *{} \ar@{-}[r] & *{} & & *{}\ar@{-}[r]^{E_4 \ (2,3)} & *{} \ar@{-}[r] & *{}    \\
*{}\ar@{-}[r] & *{} \ar[r]^f & *{} & & *{}\ar@{-}[r] & *{} \ar[r]^g & *{}    \\
*{}\ar@{-}[rrrrrr]^{E_1 \ (2,2)} & *{} \ar@{-}[d]_{E_3 \ (6,4)} & & & & *{} \ar@{-}[d]^{E_5 \ (4,6)} & *{}  \\
 & *{} & & & & *{} & \\
}
$$

\s
In the holomorphic case $fg$, the part of the Milnor fibre $F_\eta$ inside each box of type $V_{ij}$ is a disjoint union of $gcd(a_i+b_i, a_j+b_j)$ cylinders, and the part of the Milnor fibre inside each box of type $V_i$ is an $(a_i+b_i)$-covering of a sphere minus $r_i$ disks, with Euler characteristic $(a_i+b_i)(2-r_i)$. Then: $F_\eta \cap V_{13}$ (and $F_\eta \cap V_{15}$) is two cylinders; $F_\eta \cap V_{23}$ (and $F_\eta \cap V_{45}$) is five cylinders; $F_\eta \cap V_1$ is two cylinders; $F_\eta \cap V_2$ (and $F_\eta \cap V_4$) is five disks; $F_\eta \cap V_3$ (and $F_\eta \cap V_5$) is a compact surface of genus $2$ with boundary eight circles. Hence the Milnor fibre of $fg$ is a twice-punctured surface of genus $5$.

In the real analytic case $\fgbar$, according to Lemma \ref{lemma}, we have that: $F_\eta \cap V_{13}$ (and $F_\eta \cap V_{15}$) is two cylinders; $F_\eta \cap V_{23}$ (and $F_\eta \cap V_{45}$) is a cylinder; $F_\eta \cap V_1$ is two cylinders; $F_\eta \cap V_2$ (and $F_\eta \cap V_4$) is a disk; $F_\eta \cap V_3$ (and $F_\eta \cap V_5$) is a compact surface of genus $0$ with boundary four circles. Hence the Milnor fibre of $\fgbar$ is a twice-punctured surface of genus $1$, that is, a torus with boundary two disjoint circles.

Moreover, by Theorem \ref{funczeta} the zeta function of $h^{fg}$ is given by
$$Z(t) = (1-t^5)^{-2} (1-t^{10})^2$$
and the zeta function of $h^{\fgbar}$ is given by
$$Z(t) = (1-t)^{-2}(1-t^2)^2.$$
}
\end{example}

\vspace{1cm}

\end{document}